\newtheorem*{acorollary}{Corollary}
\definecolor{orange}{rgb}{1,0.5,0}
\def\Z{{\mathbb Z}}\def\T{{\mathbb T}}\def\R{{\mathbb R}}\def\C{{\mathbb C}}
\def\e{\eta}
\def\g{\gamma}
\def\s{\sigma}
\def\al{\alpha}\def\be{\beta}
\def\be{\beta}
\def\b#1{\lbrace#1\rbrace}
\def\a#1{\left|#1\right|}
\def\<{\langle}
\def\>{\rangle}
\theoremstyle{plain}
\def\a{\alpha}
\def\ti{\tilde}
\def\e{\varepsilon}
\def\s{\sigma}
\let\newpf\proof \let\proof\relax
\def\cL{\mathcal{L}}
\newcommand{\ba}{\overline{A}}
\newcommand{\cF}{\mathcal{F}}
\newcommand{\cO}{\mathcal{O}}
\def\be{\begin{equation}}
\def\ee{\end{equation}}
\def\ba{{\begin{align}}}
\def\ea{{\end{align}}}
\def\bm{\begin{pmatrix}}
\def\em{\end{pmatrix}}
\def\a{{\alpha}}
\def\g{{\gamma}}
\def\0{{\mathbf 0}}
\newtheorem{theo}{Theorem}%[section]
\newtheorem{prop}{Proposition}[section]
\theoremstyle{remark}
\theoremstyle{definition}
\newenvironment{proof}{ \noindent{\it Proof.}\quad}{\ \hfill $\Box$\vskip .2cm}
\def\ssm{\smallsetminus}
\renewcommand{\setminus}{\ssm}
\newcommand{\id}{\operatorname{id}}
\newcommand{\eps}{{\epsilon}}
\newcommand{\N}{{\mathbb N}}
\newcommand{\Q}{{\mathbb Q}}
\def\B0{{\bold{0}}}
\def\b{\beta}
\def\a{\alpha}
\def\be{\begin{equation}}
\def\ee{\end{equation}}
\def\cB{\mathcal{B}}
\def\cF{\mathcal{F}}
\def\cH{\mathcal{H}}
\def\1{{\bf 1}}
\newcommand\Diff{{\rm Diff}}
\newcommand\dD{{\rm D}}
\def\Empty{}
\newcommand\oplabel[1]{
  \def\OpArg{#1} \ifx \OpArg\Empty {} \else
  	\label{#1}
  \fi}
\newcommand{\comm}[1]{}
\newcommand{\comment}[1]{}
\begin{document}
\title[J.-C. Yoccoz and the theory of circle diffeomorphisms]{Jean-Christophe Yoccoz and the theory of circle diffeomorphisms}
\author{Hakan Eliasson, Bassam Fayad and  Raphaël Krikorian}

%\address{Hakan Eliasson, IMJ-PRG Université Paris-Diderot}, 
%{ \texttt{hakan.eliasson@math.jussieu.fr}}
\email{hakan.eliasson@math.jussieu.fr} 
\address{Bassam Fayad, IMJ-PRG CNRS}
\email{bassam@math.jussieu.fr} 
\address{Raphaël Krikorian, Universit\'{e} de Cergy-Pontoise} 
\email{raphael.krikorian@u-cergy.fr}
%Département de Mathématiques AGM, CNRS UMR 8088,
%Universit\'{e} de Cergy-Pontoise,  2, av. Adolphe Chauvin F-95302 Cergy-Pontoise, France } 
%\email{hakan.eliasson@math.jussieu.fr} \email{bassam@math.jussieu.fr} \email{bassam@math.jussieu.fr}
%\email{raphael.krikorian@u-cergy.fr}

\maketitle

{\it  
Pour beaucoup d'entre nous, Jean-Christophe Yoccoz était à la fois un frère, un ami, un modèle et un point de repère. 

C'était une personne intègre, attachante, contrastée. Son caractère tempéré cachait tant d'extrêmes : talent exceptionnel et étonnante simplicité, rigueur et légèreté,  dévouement et intransigeance, joie de vivre profonde et absence de toute frivolité.

Mathématicien de grand renom, il acceptait, sans en tirer aucune vanité, le rôle important qu'il avait à jouer dans le monde scientifique et la communauté mathématique en particulier.  

Jean-Christophe aimait transmettre et partager sa passion des mathématiques. Il trouvait toujours du temps pour les collègues, surtout les jeunes, qu'il écoutait avec enthousiasme et attention, d'un oeil  critique mais jamais acerbe.

On se souviendra de lui toujours souriant, entouré de ses amis et collègues, joyeux aussi bien pour une belle idée nouvelle que pour un rayon de soleil.

}

\bigskip

A crucial discovery made by Kolmogorov around the middle of last century is that  
 most (in a measure theoretical sense) quasi-periodic motions are robust 
  in a number of physical situations, including the quasi-integrable Hamiltonian systems. Indeed, KAM Theory (after Kolmogorov-Arnold-Moser) established the persistence under perturbations  of most of the invariant  tori of  an integrable Hamiltonian system.

Prior to that, Poincar\'e and Denjoy's theory of circle homeomorphisms established that any  $C^2$ circle diffeomorphism that does not have periodic orbits is topologically conjugate to an irrational rotation, and is thus quasi-periodic. With the subsequent works of many great mathematicians, the theory of quasi-periodic motions on the circle grew to become one of the most complete and inspiring theories of modern dynamics.

 Our aim in this note is to present some of the the crucial contributions of Jean-Christophe Yoccoz in this field. We start with a short historical account before exposing Yoccoz' work. Then we give a brief description of the main conceptual and technical tools of the theory, with a focus on describing Yoccoz' work and contributions.

\section{Circle diffeomorphisms}

\subsection{Homeomorphisms of the circle. Poincaré theory}
Let $r\in\N\cup\{\infty,\omega\}$. We denote by $\Diff_{+}^r(\T)$ the group (for the composition) of orientation preserving homeomorphisms of $\T=\R/\Z$ which, together with their inverses,  are of class $C^r$. We shall be particularly concerned with the {\it analytic} ($r=\omega$) and the {\it smooth} ($r=\infty$) cases. By $T_\alpha$, $\alpha\in\T$,  we shall  denote the rotation of $\T$ by $\alpha$, $x\mapsto x+\a$ ---  it is an element of $\Diff_{+}^r(\T)$. 

We denote by $\dD^r(\T)$ its universal covering space which is the set of $C^r$-diffeomorphims of the real axis $\R$ commuting with the translation by 1 $T:x\mapsto x+1$. More generally, when $\alpha\in\R$, $T_\a$ will denote the translation of $\R$ by $\alpha$  ---  it is an element of  $\dD^r(\T)$. We have $\Diff^r_{+}(\T)\simeq \dD^r(\T)/\{T^p,p\in\Z\}$ and we denote by $\pi$ the canonical projection $\dD^r(\T)\to \Diff^r_{+}(\T)$.

Let $\T_\nu=\{z\in \C: \vert \Im z\vert\leq \nu\}/\Z$ and let $\Diff_{+}^\omega(\T_\nu)$ denote the subgroup of real analytic  diffeomorphismes on $\T$ that extends holomorphically to $\T_\nu\to\C/\Z$. 
For $f,g\in\Diff_{+}^\omega(\T_\nu)$ let
$$\vert\vert f-g\vert\vert_{\nu}=\sup_{z\in\T_\nu}\vert\vert f(z)-g(z)\vert\vert$$
(where $\vert\vert\ .\ \vert\vert$ denotes the metric on $\C/\Z$).

\subsubsection*{The rotation number} Poincaré  laid the foundations of the theory of circle dynamics. In particular, he introduced the central notion of rotation number and began the classification up to conjugation of these systems.  The rotation number $\rho(\bar f)\in\R$ of  a homeomorphism  $\bar f\in \dD^0(\T)$ is the uniform limit 
$$\lim_{n\to\infty}\frac{{\bar f}^n(x)-x}{n}$$
(this limit exists and is independent of $x\in\T$).
If   $f\in \Diff^r_{+}(\T)$ and $\pi(\bar f)=f$, $\bar f\in \dD^r(\T)$  the rotation number $\rho(f)$ of $f$ is   the element of $\R/\Z$,  $\rho(f)=\rho(\bar f)\mod 1$; this element of $\T$ is independent of the choice of the lift $\bar f$ of $f$. If $\mu$ is an invariant probability measure for $f$ one has the following relation
\begin{equation} \label{eq.rot} \rho(\bar f)=\int_{\T} (\bar f(x)-x)d\mu(x). \end{equation}

The rotation number has many nice properties; let's mention some  of them: the rotation number of a rotation $T_{\a}$ is equal to $\a$,  the rotation number $\rho(f)$ depends continuously on $f$ and it  is invariant under {\it  conjugation}  which means that if $f,g\in \Diff_{+}^0(\T)$ satisfy $f\circ h=h\circ g$ where $h:\T\to\T$ is an orientation preserving  homeomorphism then $\rho(f)=\rho(g)$.

\subsubsection*{Poincaré theorem}
The rotation number of a rotation $T_{\a}$ is equal to $\a$ and it is thus natural to ask whether translations are a universal model for circle homeomorphisms, but this is not always  the case. Indeed, it is true that  a homeomorphism of the circle has rational rotation number if and only if it has at least one periodic point, but unlike the case of translation with rational rotation number, the set of periodic points of  such a homeomorphism could be  at most countable. On the other hand when the rotation number of a homeomorphism is irrational one has more structure:
\begin{theo}[Poincaré]\label{theo:Poincare} Let $f$ be a homeomorphism of the circle with irrational rotation number. Then, there exists $h:\T\to\T$ surjective, orientation preserving and continuous such that 
$$ h\circ f=T_{\rho(f)}\circ h.$$

In other words $(\T,T_{\rho(f)})$ is a topological  factor of $(\T,f)$.
\end{theo}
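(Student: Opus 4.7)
I would follow the classical Poincar\'e argument: construct a semi-conjugacy by matching the combinatorial order of one orbit of $f$ with the corresponding orbit of the rotation $T_\alpha$, where $\alpha=\rho(f)$, and then extend by monotone continuity.

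First I would lift everything to $\R$. Pick $\bar f\in\dD^0(\T)$ with $\pi(\bar f)=f$ and set $\tilde\alpha=\rho(\bar f)\in\R$, so that $\tilde\alpha\notin\Q$. Fix a base point $x_0\in\R$ and consider
$$A=\{\bar f^n(x_0)+p\,:\,n,p\in\Z\}\subset\R,\qquad B=\{n\tilde\alpha+p\,:\,n,p\in\Z\}\subset\R.$$
Both families are injectively indexed by $(n,p)\in\Z^2$: for $B$ this is the irrationality of $\tilde\alpha$, and for $A$ any coincidence $\bar f^n(x_0)+p=\bar f^m(x_0)+q$ with $n\neq m$ would force $f$ to have a periodic orbit, contradicting $\rho(f)\notin\Q$.

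The crucial combinatorial step, and the main obstacle, is the following key lemma: for all $n,m,p,q\in\Z$,
$$\bar f^n(x_0)+p<\bar f^m(x_0)+q \iff n\tilde\alpha+p<m\tilde\alpha+q.$$
This is the only place where the definition of the rotation number is really used. To prove the direction $\Leftarrow$, translate by $m$ and set $y=\bar f^m(x_0)$, $k=n-m$; the claim becomes $k\tilde\alpha<q-p\Rightarrow \bar f^k(y)-y<q-p$. If this failed, the commutation $\bar f\circ T=T\circ\bar f$ together with the monotonicity of $\bar f$ would give by induction $\bar f^{jk}(y)\geq y+j(q-p)$ for every $j\geq1$, whence
$$\rho(\bar f)=\lim_{j\to\infty}\frac{\bar f^{jk}(y)-y}{jk}\geq\frac{q-p}{k},$$
contradicting $k\tilde\alpha<q-p$. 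The reverse implication is symmetric, and the strictness in the equivalence follows from $\tilde\alpha\notin\Q$, which rules out equality on either side.

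With the lemma in hand, define $\bar h:A\to B$ by $\bar h(\bar f^n(x_0)+p)=n\tilde\alpha+p$; this is a strictly increasing bijection that commutes with the integer translation $T$ and intertwines $\bar f$ with $T_{\tilde\alpha}$. I would extend $\bar h$ to the closure $\overline{A}$ by one-sided limits; the monotonicity on the dense set $A$ makes the extension well-defined and continuous on $\overline{A}$. On each connected component $(a,b)$ of $\R\setminus\overline{A}$, the two one-sided limits of $\bar h$ at $a$ and $b$ necessarily coincide, because $B$ is dense in $\R$ (by Weyl / by irrationality of $\tilde\alpha$) and a jump in $\bar h$ would produce a gap in $\overline{B}=\R$. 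Setting $\bar h$ equal to this common value on each such gap yields a continuous, non-decreasing, surjective map $\bar h:\R\to\R$ that commutes with $T$ and satisfies $\bar h\circ\bar f=T_{\tilde\alpha}\circ\bar h$. Passing to the quotient $\T=\R/\Z$ produces the desired continuous, surjective, orientation-preserving $h:\T\to\T$ with $h\circ f=T_{\rho(f)}\circ h$.
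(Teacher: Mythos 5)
Your argument is correct in its essentials, but it follows a genuinely different route from the one the paper indicates. You run the original Poincar\'e argument: the key lemma matching the order of the lifted orbit $\{\bar f^n(x_0)+p\}$ with that of $\{n\tilde\alpha+p\}$, followed by monotone extension to $\overline A$ and collapsing of the gaps using the density of $\Z\tilde\alpha+\Z$ in $\R$. The paper instead sketches the construction of $h$ from an invariant probability measure $\mu_f$ (whose existence is Krylov--Bogolyubov), setting $h(x)=\mu_f([0,x])$ and using the identity \eqref{eq.rot} to see that $h$ intertwines $f$ with $T_{\rho(f)}$. Your approach is more elementary --- it needs no measure theory and isolates exactly where the definition of the rotation number enters --- while the measure-theoretic construction comes with extra structure for free: it ties $h$ directly to $\mu_f$, shows that $h$ is a homeomorphism if and only if $\supp\mu_f=\T$, and identifies the intervals on which $h$ is constant with the wandering intervals accumulating on the minimal set, as the paper goes on to explain. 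One small point you should make explicit in the key lemma: when $k=n-m<0$, dividing $\bar f^{jk}(y)-y\ge j(q-p)$ by the negative quantity $jk$ reverses the inequality, so the displayed bound $\lim_j(\bar f^{jk}(y)-y)/(jk)\ge (q-p)/k$ is not literally what you get; this is harmless because the full statement is symmetric under exchanging $(n,p)$ with $(m,q)$, so one may always reduce to $k\ge 1$, but that reduction should be stated.
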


{ Poincaré's theorem and its proof have many fundamental consequences. In particular one can prove that any orientation preserving homeomorphism of the circle with irrational rotation number is uniquely ergodic, i.e. has a unique invariant probability measure. (If $\mu_{f}$ is the invariant probability measure of $f$, one can choose $h(x)=\mu_{f}([0,x])$ if $x>0$ and $h(x)=-\mu_{f}([0,x])$ if $x<0$). The map $h$ is a homeomorphism if and only if the topological support of $\mu_{f}$ is the whole circle. If not, the intervals on which $h$ is not strictly increasing belong to orbits of wandering intervals of $f$ \footnote{A wandering interval is an interval disjoint from its image by any iterate of $f$.} which accumulate by unique ergodicity on the support of $\mu_{f}$, which is the minimal set of $f$ \footnote{A minimal set of $f$ is a nonempty, closed, invariant set which is minimal (for the inclusion) for these properties.}. 
On its minimal set $f$ is isomorphic  to $R_{\rho(f)}$.}

\subsection{Diffeomorphisms of the circle. Denjoy theory.}
In view of Poincaré theorem it is natural to ask for conditions ensuring {\it $C^0$-linearization}, i.e.  that the semi-conjugation $h$ defined above is in fact a homeomorphism. This question was answered by Denjoy in the 30's. Denjoy constructed examples of diffeomorphisms $f$ of class $C^1$ that are not conjugated to rotations (i.e. such that the support of $\mu_{f}$ is not the whole circle). But he also showed that this cannot happen when $f$ is of class $C^2$.

\begin{theo}[Denjoy \cite{De}]\label{theo:denjoy} Le $f$ be an orientation preserving diffeomorphism of the circle with an  irrational rotation number such that $\ln Df$ has bounded variation (for example $f$ is of class $C^2$). Then there exists a homeomorphism $h:\T\to\T$ such that
\be h\circ f=T_{\rho(f)}\circ h.
\ee
\end{theo}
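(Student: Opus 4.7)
Poincaré's theorem yields a continuous, orientation preserving surjection $h : \T \to \T$ with $h \circ f = T_{\rho(f)} \circ h$. Since $\rho(f)$ is irrational, $T_{\rho(f)}$ is minimal, so $h$ is a homeomorphism if and only if $f$ has no \emph{wandering interval} (an open interval whose iterates under $f$ are pairwise disjoint). The plan is to argue by contradiction: assume such an interval $I \subset \T$ exists and derive an incompatibility from the bounded variation hypothesis.

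Set $V := \mathrm{Var}(\log Df)$. The key analytic ingredient is a \emph{bounded distortion estimate}: if $J \subset \T$ is an interval such that $J, f(J), \ldots, f^{N-1}(J)$ are pairwise disjoint, then
\begin{equation*}
\Bigl| \log (f^N)'(x) - \log (f^N)'(y) \Bigr| \leq V \qquad \text{for all } x, y \in J.
\end{equation*}
Indeed, writing $\log (f^N)'(x) - \log (f^N)'(y) = \sum_{k=0}^{N-1} \bigl[\log f'(f^k(x)) - \log f'(f^k(y))\bigr]$, each summand is bounded by the variation of $\log f'$ on $f^k(J)$, and summing over the pairwise disjoint images gives at most the total variation $V$.

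The second ingredient is combinatorial. Denote by $(q_n)$ the denominators of the convergents of the continued fraction expansion of $\rho(f)$. These are the closest-return times for any orbit of the rotation $T_{\rho(f)}$, and by Poincaré's semiconjugacy they inherit the same role for the orbits of $f$. In particular, $f^{q_n}(I)$ and $f^{-q_n}(I)$ lie immediately adjacent to $I$ on opposite sides of $\T$. One uses this to verify that the enlarged interval $\tilde I_n$, obtained by adjoining $I$, $f^{-q_n}(I)$ and the short gap between them, has the property that $\tilde I_n, f(\tilde I_n), \ldots, f^{q_n - 1}(\tilde I_n)$ are pairwise disjoint. Applying the distortion estimate to $J = \tilde I_n$ and $N = q_n$, and using the mean value theorem to produce $\xi \in I$, $\eta \in f^{-q_n}(I)$ with $(f^{q_n})'(\xi) = |f^{q_n}(I)|/|I|$ and $(f^{q_n})'(\eta) = |I|/|f^{-q_n}(I)|$, the ratio bound gives
\begin{equation*}
|f^{q_n}(I)| \cdot |f^{-q_n}(I)| \geq e^{-V} |I|^2.
\end{equation*}
On the other hand, pairwise disjointness of the iterates of $I$ forces $\sum_{n \in \Z} |f^n(I)| \leq 1$, so $|f^{\pm q_n}(I)| \to 0$ as $n \to \infty$, contradicting the displayed lower bound.

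The main obstacle is the combinatorial construction of the enlarged interval $\tilde I_n$ and the verification that its first $q_n$ forward iterates remain pairwise disjoint; this rests on the order structure of orbits of an irrational rotation, conveniently encoded by the continued fraction expansion of $\rho(f)$. Once this combinatorial setup is in place, the bounded variation hypothesis directly supplies the distortion bound, and the summability of the $|f^n(I)|$ closes the argument.
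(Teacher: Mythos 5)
Your proof is correct and is the classical wandering-interval argument for Denjoy's theorem. Note, however, that the paper (a survey) does not actually prove this statement: it only records in Section \ref{ss23} that the proof rests on the Denjoy inequality $\vert \ln Df^{q_n}(x)\vert \leq \mathrm{Var}(\ln Df)$, obtained from the ordering of $f$-orbits (inherited from $T_{\rho(f)}$ via the semiconjugacy) together with Koksma's inequality. Your route uses the same two ingredients --- bounded distortion from the bounded variation of $\ln Df$, and the combinatorics of closest returns at times $q_n$ --- but deploys them slightly differently: instead of establishing the uniform bound on $\ln Df^{q_n}$ over the whole circle (which already kills wandering intervals, since it gives $|f^{q_n}(I)| \geq e^{-V}|I|$ for all $n$, contradicting $|f^{q_n}(I)| \to 0$), you localize the distortion estimate to the enlarged interval $\tilde I_n$ and extract the product bound $|f^{q_n}(I)|\cdot|f^{-q_n}(I)| \geq e^{-V}|I|^2$. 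Both variants are standard and equivalent in substance. One small caveat in your combinatorial step: since $h$ may collapse other wandering intervals, the pullbacks $f^j(\tilde I_n)$, $0 \leq j < q_n$, are only guaranteed to overlap in sets that $h$ sends to single points, so they may cover a point twice; this degrades the constant from $e^{-V}$ to $e^{-2V}$ (or is avoided by taking $\tilde I_n$ half-open), and does not affect the contradiction.
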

Notice that the above {\it conjugacy} $h$ is almost unique in the sense that if $\ti h$ is another homeomorphism conjugating $f$ to $T_{\rho(f)}$  then $\ti h=T_{\b}\circ h$ for some $\b\in\T$.

\subsection{$C^r$- linearization: the local theory}

The next natural step is to ask for conditions ensuring {\it $C^r$-linearization}, i.e.  $C^r$-regularity for the conjugation(s) $h$ given by Denjoy theorem. Already, to get a continuous conjugation we saw that one has to assume  that the rotation number is irrational and that the map $f$ is not only a diffeomorphism, but also  has higher regularity (for example $C^2$). However, higher regularity of  $f$ is not sufficient as was shown by some examples of  Arnold.  One also needs   {\it arithmetic conditions} on the rotation number.

An irrational  number $\a\in(0,1)$ is said to be {\it Diophantine} with {\it exponent} $\sigma\geq 0$ and {\it constant} $\gamma>0$ if it satisfies 
$$ \forall  (p,q)\in\Z\times \Z^*,\ |\a-\frac{p}{q}|\geq \frac{\gamma}{q^{2+\sigma}}. 
$$
Notice that this is indeed a condition on the angles $\al\in\T$, and we 
denote the set of  such $\a$ by $\mathcal{CD}(\gamma,\sigma)$.  As soon as $\sigma>0$ the Lebesgue measure of the set $\T\setminus \mathcal{CD}(\gamma,\sigma)$ goes to zero as $\gamma$ goes to zero and the union $\mathcal{CD}(\sigma):=\bigcup_{\gamma>0}\mathcal{CD}(\gamma,\sigma)$ has full Lebesgue measure in $\T$. An angle is {\it Diophantine} if it belongs to 
$\mathcal{C}\mathcal{D}=\bigcup_{\sigma\geq 0} \mathcal{CD}(\sigma)$. An irrational angle that is not Diophantine is called Liouville.

\subsubsection*{Arnold theorem, KAM theory}
Under such an arithmetic condition on the rotation number  Arnold proved the first $C^{\omega}$-linearization result.

\begin{theo}[Arnold \cite{Arnold}]\label{theo:arnold}  For any $\a\in {\mathcal{CD}}$ and $\nu>0$ there exists $\e(\a,\nu)>0$ with the following property: 
any analytic  diffeomorphism $f\in \Diff^\omega_{+}(\T_\nu)$ such that
$\rho(f)=\a$ and $\|f-T_{\rho(f)}\|_{\nu}<\e(\rho(f),\nu)$ 
 is conjugated to $T_{\rho(f)}$ by an analytic  diffeomorphism $h$ in $ \Diff^\omega_{+}(\T_{\frac\nu2})$. 
 
Moreover, for any $\a\in \mathcal{CD}(\gamma,\sigma)$, there exists such an $\e(\a,\nu)$ that only depends on $\sigma,\gamma$ and $\nu$.
\end{theo}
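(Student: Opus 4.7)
The plan is to solve the conjugacy equation $h\circ f=T_\a\circ h$ by a Newton-type (KAM) iterative scheme. Writing $f=T_\a+\varphi$ with $\|\varphi\|_\nu\le\e$ small and seeking $h=\mathrm{id}+u$ with $u$ small, the conjugacy equation becomes $u\circ f-u=-\varphi$; its linearization at $u=0$ is the cohomological equation
$$u(x+\a)-u(x)=-\varphi(x).$$
If one can solve this approximately and substitute the resulting $h$, the new conjugation error $\varphi_1$ should be bounded by $\|\varphi\|_\nu^2$, i.e. quadratic in the original smallness, which is what makes the scheme super-exponentially convergent.

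Solving the cohomological equation is the core analytic step. Expanding in Fourier series $\varphi(x)=\sum_{k}\widehat\varphi_k e^{i2\pi kx}$, one obtains formally $\widehat u_k=-\widehat\varphi_k/(e^{i2\pi k\a}-1)$ for $k\ne 0$. The Diophantine hypothesis $\a\in\mathcal{CD}(\gamma,\sigma)$ gives a lower bound $|e^{i2\pi k\a}-1|\gtrsim \gamma/|k|^{1+\sigma}$; combined with the exponential decay $|\widehat\varphi_k|\le\|\varphi\|_\nu e^{-2\pi|k|\nu}$ of Fourier coefficients of functions analytic on $\T_\nu$, a geometric summation yields the fundamental small-divisor estimate
$$\|u\|_{\nu-\delta}\le C(\gamma,\sigma)\,\delta^{-(1+\sigma)}\,\|\varphi\|_\nu$$
at the price of shrinking the strip of analyticity by $\delta>0$.

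A genuine obstruction remains: the linearized equation is solvable only if $\widehat\varphi_0=0$. This is where the hypothesis $\rho(f)=\a$ is used in a crucial way. By formula \eqref{eq.rot}, the unique invariant probability measure $\mu_f$ of $f$ satisfies $\int_\T\varphi\,d\mu_f=0$; since $f$ is $O(\e)$-close to $T_\a$ and the latter has Lebesgue as its invariant measure, an elementary argument (using the uniqueness of $\mu_f$ and continuity under perturbation) shows $\mu_f$ is $O(\e)$-close to Lebesgue, whence $\widehat\varphi_0=\int\varphi\,d\mathrm{Leb}=O(\e^2)$ is already quadratically small and can be absorbed into the error. Composing with $h=\mathrm{id}+u$ and Taylor-expanding, one then obtains $f_1=h\circ f\circ h^{-1}=T_\a+\varphi_1$ with the quadratic estimate
$$\|\varphi_1\|_{\nu-2\delta}\le C(\gamma,\sigma)\,\delta^{-2(1+\sigma)}\,\|\varphi\|_\nu^2,$$
valid whenever $\|u\|_{\nu-\delta}$ is small enough to ensure that $h$ is a diffeomorphism on the relevant strip.

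The scheme is then iterated: choose a geometrically decreasing sequence $\delta_n$ with $\sum_n\delta_n<\nu/2$, and set $\e_{n+1}\le C\delta_n^{-2(1+\sigma)}\e_n^2$. The main obstacle, and the heart of the KAM argument, is the quantitative bookkeeping: one must verify that the super-exponential decay $\e_n\lesssim\eta^{2^n}$ (for some $\eta<1$) swamps the polynomial blow-up $\delta_n^{-2(1+\sigma)}$ at every step, so that the iteration survives in the final strip $\T_{\nu/2}$. This forces the initial threshold $\e(\a,\nu)$ to be chosen in terms of $\gamma,\sigma,\nu$ only, which gives the uniformity statement for $\a\in\mathcal{CD}(\gamma,\sigma)$. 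Once convergence is established, the composition $h_\infty=\lim_{n\to\infty}(\mathrm{id}+u_n)\circ\cdots\circ(\mathrm{id}+u_1)$ lies in $\Diff^\omega_+(\T_{\nu/2})$ and conjugates $f$ to $T_\a$.
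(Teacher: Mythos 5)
Your proposal is correct and follows essentially the same route as the paper's own sketch in Section~\ref{sec:kam}: linearize the conjugacy equation to the cohomological equation, solve it by Fourier series using the Diophantine lower bound on the small divisors, absorb the zero mode $\widehat\varphi_0=O(\|\varphi\|^2)$ thanks to $\rho(f)=\alpha$, and iterate the quadratically convergent Newton scheme (with shrinking analytic strips playing the role of the truncation used in the paper's smooth presentation). The only quibbles are cosmetic: the small-divisor estimate actually costs $\delta^{-(2+\sigma)}$ rather than $\delta^{-(1+\sigma)}$, and the claimed $O(\varepsilon)$-closeness of $\mu_f$ to Lebesgue needs a quantitative justification (for instance, one more commonly deduces $|\widehat\varphi_0|=O(\varepsilon^2)$ from the elementary bound $|\rho(T_\beta+\psi)-\beta|\leq\|\psi\|_0$ applied after the conjugation step), but neither point affects the validity of the argument.
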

The proof of this result  was  one of the first application of the new  ideas designed by Kolmogorov and Arnold (and later by  Moser in the smooth, that is $C^\infty$,  case) to handle the so-called {\it small-divisors problems} (see Section \ref{sec:kam}). This would ulteriorly be encompassed in  what is now known as KAM theory. By essence, KAM theory is a perturbative method and requires crucially a closeness condition to the rotation. 

Later Michel Herman proved a smooth (and also a differentiable) version of this result. 
He also proved that the Diophantine condition was optimal in the smooth case. Indeed,   for any Liouville angle $\a$, it is easy to construct {\it  via} successive conjugations a smooth circle diffeomorphism with rotation number $\a$ that is not absolutely continuously conjugate to $T_\a$ (See for example \cite[Chapter XI]{He}). 

Herman knew that the Diophantine condition wasn't optimal in the analytic category\footnote{Herman cites in \cite{He} the two papers  of R\"ussmann
\cite{Ru1}, \cite {Ru2}. See also  the work of  Brjuno \cite{Br}.}, but it was Yoccoz who settled the question of  the optimal condition for  linearization to hold in the local context. 

An irrational  number $\a\in(0,1)$ always admits a unique {\it  continuous fraction expansion}
$$\a=1/(a_{1}+1/(a_{2}+\cdots)$$
where $a_{1},a_{2},\ldots$ are positive integers. We then denote $\a=[a_{1},a_{2},\ldots]$. 
The rational number  $p_{n}/q_{n}:=[a_{1},\ldots,a_{n}]$,  $p_{n},q_{n}\in\N^*$, ${\rm gcd}(p_{n},q_{n})=1$ is then   called  the  $n$-th  {\it convergent} of $\a$. 

The irrational number $\a$
 is said to verify the  {\it Brjuno condition} if and only if
$$ \sum_{n=1}^\infty\frac{\ln q_{n+1}}{q_{n}}<\infty,$$
where $p_{n}/q_{n}$ are the convergents of $\a$. This is also a condition on the angles $\al\in\T$, and  the set of  such $\a$ is usually denoted by $\mathcal{B}$.

\begin{theo}[Yoccoz \cite{Y-analyticdiffeo}]
\label{theo:yoccoz-analytic-local} For any $\a\in\mathcal{B}$ and $\nu>0$ there exists $\e(\a,\nu)>0$ with the following property: 
any analytic  diffeomorphism $f\in \Diff^\omega_{+}(\T_\nu)$ such that
$\rho(f)=\a$ and $\|f-T_{\rho(f)}\|_{\nu}<\e(\rho(f),\nu)$ 
 is conjugated to $T_{\rho(f)}$ by an analytic  diffeomorphism $h$ in $ \Diff^\omega_{+}(\T_{\frac\nu2})$. 
\end{theo}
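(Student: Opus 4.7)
The plan is to run a KAM-type iterative scheme tuned to the continued fraction expansion $\alpha=[a_{1},a_{2},\ldots]$ with convergents $p_n/q_n$, rather than to a single Diophantine exponent as in Arnold's Theorem~\ref{theo:arnold}. Writing $f=T_\alpha+\varphi$ with $\|\varphi\|_\nu$ small, and looking for a conjugacy $h=\mathrm{id}+u$ satisfying $f\circ h=h\circ T_\alpha$, the conjugation equation reads
$$u(x+\alpha)-u(x)=\varphi(x+u(x)),$$
whose linearization is the cohomological equation
$$u(x+\alpha)-u(x)=\varphi(x)-\widehat{\varphi}(0).$$
I would construct inductively a sequence of analytic maps $h_n\in \Diff^\omega_{+}(\T_{\nu_n})$ with strip widths $\nu_n\searrow \nu/2$, so that the renormalized diffeomorphisms $f_n=(h_1\circ\cdots\circ h_n)^{-1}\circ f\circ (h_1\circ\cdots\circ h_n)$ satisfy $\|f_n-T_\alpha\|_{\nu_n}\le \varepsilon_n$ with $\varepsilon_n\to 0$ superexponentially; the desired conjugacy is then the limit $H=\lim_n h_1\circ\cdots\circ h_n\in \Diff^\omega_{+}(\T_{\nu/2})$.

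For a single step, I would solve the linearized equation by Fourier series, $\widehat u(k)=\widehat\varphi(k)/(e^{2\pi ik\alpha}-1)$ for $k\ne 0$. The small divisors $|e^{2\pi ik\alpha}-1|$ are sharply controlled by the continued fraction expansion: the distance from $k\alpha$ to $\Z$ is of order $1/q_{m+1}$ exactly when $|k|$ is close to $q_m$. This suggests, at the $n$-th step, truncating Fourier modes at $|k|<q_{n+1}$: the loss of analyticity induced by the small denominators on that block is of order $\delta_n\sim (\log q_{n+1})/q_n$, while the high-mode tail is bounded by $\|\varphi_n\|_{\nu_n}e^{-2\pi \nu_n q_{n+1}}$, which for the chosen $\delta_n$ is already within the allowed quadratic error. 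The total loss of analyticity over the infinitely many steps is then
$$\sum_{n\ge 1}\delta_n\;\sim\;\sum_{n\ge 1}\frac{\log q_{n+1}}{q_n},$$
which is finite \emph{precisely} under the Brjuno condition. Choosing $\varepsilon(\alpha,\nu)$ smaller than a suitable negative exponential of this sum ensures that $\nu_n\ge \nu/2$ throughout the iteration.

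The zero Fourier mode $\widehat\varphi(0)$ is the apparent obstruction to solving the cohomological equation. I would handle it by using the induction hypothesis $\rho(f_n)=\alpha$: by Denjoy's theorem (Theorem~\ref{theo:denjoy}) $f_n$ admits a unique invariant probability measure $\mu_{f_n}$, and formula~\eqref{eq.rot} gives $\int_\T (f_n(x)-x-\alpha)\,d\mu_{f_n}(x)=0$, i.e.\ $\int\varphi_n\,d\mu_{f_n}=0$. In the perturbative regime the Denjoy conjugacy depends continuously on $f_n$, so $\mu_{f_n}$ is $O(\|\varphi_n\|_{\nu_n})$-close to Lebesgue, and hence $|\widehat{\varphi_n}(0)|=\bigl|\int\varphi_n\,dx-\int\varphi_n\,d\mu_{f_n}\bigr|$ is quadratically small in $\|\varphi_n\|_{\nu_n}$; it can therefore be absorbed into the next-step error without spoiling the superexponential decay of $\varepsilon_n$. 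Standard composition estimates for analytic maps on shrinking strips then upgrade the solution $u_n$ of the linearized equation into a genuine one-step conjugacy with quadratic remainder of order $\|\varphi_n\|_{\nu_n}^{2}/\delta_n^{C}$, and the partial compositions $H_n=h_1\circ\cdots\circ h_n$ converge in $\Diff^\omega_{+}(\T_{\nu/2})$ to the sought-for $H$.

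The main obstacle is the arithmetic bookkeeping: matching the Fourier truncation scale to the continued fraction scale of $\alpha$ in such a way that the Brjuno series, and no stronger arithmetic sum, governs the accumulated loss of analyticity --- replacing $q_{n+1}$ by $q_{n+1}^{2}$ in the small-divisor estimate, for instance, would force us back into a Diophantine condition. A secondary delicate point is propagating, stably along the scheme, both the identity $\rho(f_n)=\alpha$ (automatic from conjugacy, once $f_n$ is well-defined) and the uniform closeness of $\mu_{f_n}$ to Lebesgue that makes the zero-mode obstruction quadratically small at every step.
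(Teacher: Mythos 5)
Your proposal is coherent but follows a genuinely different route from the paper's. The paper obtains Theorem \ref{theo:yoccoz-analytic-local} as a corollary of Theorem \ref{theo:yoccoz-analytic-local2} (linearizability once $f$ is univalent on a strip of width $\tfrac{1}{2\pi}B(\a)+\nu_0$), which is itself proved by the geometric renormalization described in the last subsection: fundamental domains bounded by $L$ and $f(L)$, gluing and uniformization, Gr\"otzsch's theorem, and a complex Denjoy estimate; the local theorem then follows because each renormalization dilates the surviving strip by roughly $\a_n^{-1}$ while the required width $\tfrac1{2\pi}B(\a_n)+\nu_0$ decreases correspondingly, the hypothesis $\|f-T_\a\|_\nu<\e$ serving only to make the first steps well defined. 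You instead propose a Fourier-side, Brjuno-adapted Newton scheme in the line of R\"ussmann and Brjuno (whom the paper cites precisely for the pre-Yoccoz results beyond the Diophantine case), with the truncation tied to the convergents and losses $\delta_n\sim(\log q_{n+1})/q_n$ summing to the Brjuno function; your handling of the zero mode via the invariant measure is exactly the fact $\hat v(0)=O_2(v)$ that the paper records in its KAM subsection. Your route is more elementary and makes the appearance of condition $\mathcal B$ transparent, but it is intrinsically local: it yields neither Theorem \ref{theo:yoccoz-analytic-local2}, nor (by inverting the construction) the counterexamples of Theorem \ref{theo:yoccoz-analytic-counterexample}, nor the passage to the global condition $\mathcal H$, all of which come with the renormalization picture. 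One point of your ``bookkeeping'' deserves emphasis, because a naive execution fails there: at step $n$ the \emph{old} resonant modes $k=q_j$, $j<n$, still carry divisors of size $q_{j+1}$, which are not absorbed by the new loss $\delta_n$; a crude bound $\e_{n+1}\le Cq_n^{C}\e_n^2$ does not close, since $\sum 2^{-n}\log q_n$ may diverge for Brjuno numbers with tower-like growth of $q_n$. These modes must instead be controlled by the decay $e^{-2\pi\nu_n q_j}$ of the corresponding Fourier coefficients, which beats $q_{j+1}$ whenever $\log q_{j+1}/q_j<2\pi\nu_n$, i.e.\ for all but finitely many $j$, the remaining finitely many being absorbed into the choice of $\e(\a,\nu)$.
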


Another ``local'' result obtained by Yoccoz gives analytic linearization under the Brjuno condition, when 
$f$ is injective on sufficiently large ``bands''. This is formulated in terms of the {\it Brjuno function} which is a $1$-periodic function $B:\R\to\R$ satisfying
 $$B(x)=xB(\{x^{-1}\})+\log x^{-1}, \quad x\in]0,1[.$$
 $B$ determines the Brjuno condition because $\a\in \mathcal B {\rm \ iff \ } B(\a)<+\infty$  (see \cite{Y-analyticdiffeo}). 

\begin{theo}[Yoccoz \cite{Y-analyticdiffeo}] \label{theo:yoccoz-analytic-local2}
 There exists $\nu_0>0$ such that if $f\in \Diff^\omega_{+}(\T)$, with $\rho(f) =\a \in {\mathcal B}$, and $f$  is analytic and injective on $\T_\nu$, with $\nu \geq{ \frac1{2\pi}} B(\a) +\nu_0$, then $f$
 is $C^{\omega}$-linearizable.
 \end{theo}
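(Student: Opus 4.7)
The plan is to iterate Yoccoz's analytic renormalization procedure so as to reduce, after finitely many steps, to the perturbative regime covered by Theorem~\ref{theo:yoccoz-analytic-local}. Writing $\alpha=[a_{1},a_{2},\ldots]$ with convergents $p_{n}/q_{n}$ and lifting $f$ to $\bar f\in \dD^{\omega}(\T)$, the pair $(\bar f,T_{1})$ encodes the relevant $\Z^{2}$-action near the real axis. One renormalization step produces a new commuting pair whose rotation number is the Gauss iterate $G(\alpha)=\{1/\alpha\}$, after rescaling by a factor $\sim 1/\alpha$; iterated $n$ times, this yields a commuting pair $(F_{n},G_{n})$ representing the first return of $\bar f$ to an interval of length $\sim 1/q_{n}$, rescaled to unit length. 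Because $f$ is analytic and injective on $\T_{\nu}$, the pair $(F_{n},G_{n})$ inherits a holomorphic and injective extension to a complex band of some width $\nu_{n}$.

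The crucial quantitative ingredient is a recursion of the form
$$2\pi\nu_{n+1}\geq a_{n+1}\cdot 2\pi \nu_{n} - \log a_{n+1} - c_{0}$$
for a universal $c_{0}>0$, reflecting that one renormalization step rescales lengths by a factor $\sim 1/\alpha_{n}\sim a_{n+1}$ while losing only a logarithmic amount of conformal modulus in the first-return construction. Unwinding this inequality and dividing by $q_{n}$ yields
$$\frac{2\pi \nu_{n}}{q_{n}} \geq 2\pi \nu - \sum_{k=0}^{n-1} \frac{\log a_{k+1} + c_{0}}{q_{k}}.$$
The sum on the right is, up to a universal additive constant, precisely $2\pi B(\alpha)$; indeed this is the whole point of the normalization chosen in the definition of $B$, since the Brjuno functional equation $B(\alpha)=\alpha B(G(\alpha))+\log(1/\alpha)$ is essentially the unwound form of the recursion above. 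Absorbing that constant into $\nu_{0}$, the hypothesis $\nu \geq \frac{1}{2\pi}B(\alpha)+\nu_{0}$ then forces $\nu_{n}/q_{n}\geq \nu_{0}'>0$ for all $n$.

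With $\nu_{n}\geq q_{n}\nu_{0}'\to\infty$, the Fourier coefficients of the circle diffeomorphism $\varphi_{n}$ associated to $(F_{n},G_{n})$ decay like $e^{-2\pi k\nu_{n}}$, so $\varphi_{n}$ is exponentially close to the rotation $T_{\alpha_{n}}$ on $\T_{\nu_{n}/2}$ as $n$ grows. Since $\alpha_{n}=G^{n}(\alpha)$ is again Brjuno, I pick $n=n(\alpha,\nu)$ large enough that the smallness threshold $\varepsilon(\alpha_{n},\nu_{n}/2)$ of Theorem~\ref{theo:yoccoz-analytic-local} is satisfied; that theorem then furnishes an analytic conjugacy of $\varphi_{n}$ to $T_{\alpha_{n}}$, and pulling back through the $n$ renormalization steps, each of which is itself an analytic invertible identification of complex neighborhoods, produces an analytic conjugacy $h\in\Diff^{\omega}_{+}(\T_{\nu'})$ between $f$ and $T_{\rho(f)}$ for some $\nu'>0$. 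The heart of the argument, and the main place where I expect real work, is establishing the band-width recursion with the sharp logarithmic constant $-\log a_{n+1}$: a cruder bound would force a strictly stronger arithmetic condition than Brjuno, so one must analyze carefully the geometry of the fundamental domains of $(\bar f,T_{1})$ and precisely control the conformal modulus lost at each first-return step.
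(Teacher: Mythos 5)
The paper itself gives no proof of this statement (it is quoted from \cite{Y-analyticdiffeo}); the only internal point of comparison is the description of the renormalization scheme in the last subsection of Section 2. Your overall strategy --- renormalize the commuting pair, track the width of the band of univalence, compare the accumulated losses with the Brjuno sum, and conclude once the renormalized maps are forced to be close to translations --- is exactly that scheme, and you are right that the sharp band-width recursion is the crux. But the recursion you display is wrong, and inconsistent with the way you then unwind it. The logarithmic loss of conformal modulus in the first-return construction is incurred in the coordinates of step $n$, i.e.\ \emph{before} the rescaling by $\sim 1/\a_{n}\sim a_{n+1}$; the correct shape is
$$2\pi\nu_{n+1}\ \geq\ a_{n+1}\left(2\pi\nu_{n}-\log a_{n+1}-c_{0}\right),$$
which is precisely the function ${\mathcal R}_{\a}(r)=\a^{-1}(r-\ln\a^{-1}+1)$ that the paper uses to define condition $\mathcal H$. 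Your version puts the loss \emph{after} the rescaling; unwinding it honestly gives $\nu_{n}/q_{n}\geq \nu-\frac{1}{2\pi}\sum_{k}(\log a_{k+1}+c_{0})/q_{k+1}$, and since $\sum_{k}\log q_{k+1}/q_{k+1}<\infty$ for \emph{every} irrational (the $q_{k}$ grow at least geometrically), your recursion as stated would yield analytic linearizability for every irrational rotation number once $\nu$ exceeds a universal constant, contradicting Theorem \ref{theo:yoccoz-analytic-counterexample}. The sum $\sum_{k}(\log a_{k+1}+c_{0})/q_{k}$ that you actually write down --- and which is indeed comparable to $B(\a)$ --- is the one produced by the corrected recursion, so the rest of your computation survives once the displayed inequality is fixed.

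Two further points. First, your endgame reduces to Theorem \ref{theo:yoccoz-analytic-local}, but the paper presents that theorem as a corollary of the statement you are proving, so within this logical organization your argument is circular; it is also quantitatively delicate, since the threshold $\e(\a_{n},\nu_{n}/2)$ depends on $B(\a_{n})$ and one must check it is beaten by the closeness to $T_{\a_{n}}$ obtained, which again requires knowing that the excess $\nu_{n}-\frac{1}{2\pi}B(\a_{n})$ stays bounded below. The standard, non-circular conclusion sketched in the paper is different: since the domains of the renormalized maps do not shrink to zero when read back in the original coordinates, the real axis is Lyapunov stable for the complex extension of $f$, and linearizability follows from the compactness/stability criterion of Section \ref{bounded} (Gr\"otzsch plus Montel), with no appeal to a KAM theorem. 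Second, the sentence asserting that $(F_{n},G_{n})$ ``inherits'' a univalent extension to a band of width $\nu_{n}$ conceals the complex Denjoy estimate controlling the imaginary parts of the $\sim 1/\a_{n}$ iterates needed before the first return; the paper flags this as a separate, nontrivial ingredient, and it is exactly where the constant $c_{0}$ in the recursion is earned.
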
 
 It is not hard to obtain Theorem \ref{theo:yoccoz-analytic-local} as a corollary of Theorem \ref{theo:yoccoz-analytic-local2}.

Combining the results of Yoccoz \cite{Y-germs} on  the optimality of  condition $\mathcal{B}$ for holomorphic germs and a theorem of Perez-Marco \cite{PM} that makes a bridge between holomorphic germs and analytic circle diffeomorphisms it can be seen that condition $\mathcal{B}$ is also optimal in Theorem \ref{theo:yoccoz-analytic-local}. 
Indeed, Yoccoz shows that

\begin{theo}[Yoccoz \cite{Y-analyticdiffeo}]\label{theo:yoccoz-analytic-counterexample} For any $\al\notin \mathcal{B}$, any $\nu>0$ and any $\e>0$, the following holds:
there exists an analytic  diffeomorphism $f\in \Diff^\omega_{+}(\T_\nu)$ such that
$\rho(f)=\a$ and $\|f-T_{\rho(f)}\|_{\nu}<\e$ that is not analytically linearizable. 
\end{theo}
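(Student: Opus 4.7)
The plan is to deduce this theorem by combining two results already alluded to in the excerpt: the optimality of the Brjuno condition for local holomorphic linearization of germs, proved by Yoccoz in \cite{Y-germs}, and the bridge theorem of Perez-Marco \cite{PM} relating holomorphic germs with an indifferent fixed point to analytic circle diffeomorphisms. The strategy is to transfer a non-linearizable holomorphic germ with multiplier $e^{2\pi i \alpha}$ into a non-linearizable analytic circle diffeomorphism with rotation number $\alpha$, while controlling its distance to $T_\alpha$ in the $\|\cdot\|_\nu$-norm.

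First I would invoke Yoccoz's germ result: whenever $\alpha \notin \mathcal{B}$, there exist holomorphic germs $\phi(z) = e^{2\pi i \alpha} z + O(z^2)$, defined and univalent on a fixed disk around the origin, that admit no local holomorphic linearization; moreover, such $\phi$ can be chosen arbitrarily close, in the topology of uniform convergence on that disk, to the pure rotation germ $z \mapsto e^{2\pi i \alpha} z$. Concretely, one expects germs of the form $\phi(z) = e^{2\pi i \alpha}(z + \eta z^2)$ with $\eta \neq 0$ arbitrarily small to already provide such counterexamples.

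Next I would apply Perez-Marco's bridge, which to each such germ $\phi$ canonically associates an analytic circle diffeomorphism $f_\phi$ of rotation number $\alpha$ enjoying two crucial properties: (i) the assignment $\phi \mapsto f_\phi$ is continuous in the relevant analytic topologies and sends the linear germ to $T_\alpha$; (ii) $f_\phi$ is analytically conjugate to $T_\alpha$ on $\T$ if and only if $\phi$ is holomorphically linearizable at $0$. Granting both ingredients, the conclusion is immediate: given $\alpha \notin \mathcal{B}$, $\nu > 0$, and $\epsilon > 0$, one picks a non-linearizable $\phi$ so close to the linear germ that $f_\phi$ extends holomorphically to $\T_\nu$ and satisfies $\|f_\phi - T_\alpha\|_\nu < \epsilon$; the non-linearizability of $\phi$, guaranteed by Yoccoz's germ theorem, then transfers via (ii) to the non-linearizability of $f_\phi$.

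The main obstacle, packed inside the two black-boxed citations, is twofold. On the germ side, \cite{Y-germs} demands a delicate renormalization argument exploiting the divergence of $\sum (\log q_{n+1})/q_n$ to produce an obstruction to linearization that is robust under perturbation of the higher-order coefficients. On the bridge side, \cite{PM} calls for a holomorphic (or quasiconformal) interpolation gluing the internal dynamics of $\phi$ on a topological disk to an exterior rotation on a complementary annulus, so that the induced boundary dynamics is an analytic circle diffeomorphism whose analyticity width and distance to $T_\alpha$ remain under quantitative control in terms of the data on $\phi$. Once both statements are in place, the transfer described in the previous paragraph is essentially a one-line deduction.
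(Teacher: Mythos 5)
Your proposal follows exactly the route the paper itself indicates immediately before stating this theorem: combine the optimality of condition $\mathcal{B}$ for holomorphic germs from \cite{Y-germs} with the Perez-Marco bridge \cite{PM}, under which non-linearizability transfers from a germ with multiplier $e^{2\pi i\alpha}$ chosen close to linear to an analytic circle diffeomorphism of rotation number $\alpha$ close to $T_{\alpha}$ on $\T_\nu$. The survey gives no more detail than this (it only adds that Yoccoz also obtains such examples directly in \cite{Y-analyticdiffeo}), so your sketch is essentially the same argument the paper has in mind.
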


\subsection{Smooth linearization}
It is one of the great  achievements of Herman to have  proved that one can in fact get a {\it global} ({\it i.e.} non perturbative) linearization result for smooth circle diffeomorphisms of the circle for {\it almost every} rotation number (this was conjectured by Arnold in the analytic category). The important contribution of Yoccoz was to prove that Herman's result extended to all Diophantine numbers (in the smooth case).

\subsubsection*{Herman-Yoccoz theorem}

\begin{theo}[Herman \cite{He}, Yoccoz \cite{Y-smoothdiffeo}]\label{theo:herman-yoccoz} Any $f\in \Diff^\infty_{+}(\T)$ with $\rho(f)\in \mathcal{CD}$ 
is $C^{\infty}$-linearizable.
\end{theo}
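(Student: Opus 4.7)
The plan is to combine Denjoy's continuous conjugacy with quantitative distortion estimates, then bootstrap to smooth regularity via the local theory. Since $f\in C^\infty\subset C^2$, Theorem~\ref{theo:denjoy} produces a homeomorphism $h:\T\to\T$ with $h\circ f=T_\a\circ h$, where $\a=\rho(f)$ lies in some $\mathcal{CD}(\gamma,\sigma)$; the task is to upgrade $h$ to an element of $\Diff^\infty_+(\T)$.

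The heart of the argument is an a priori regularity estimate: for $f\in C^r$ with $r$ sufficiently large, one shows that $h\in C^{r-1-\sigma-\e}$ for every $\e>0$, with norm controlled in terms of $\gamma,\sigma$ and $\|f\|_{C^r}$. Applied to $f\in C^\infty$, this immediately yields $h\in C^\infty$. To prove the estimate, one starts from Denjoy's distortion bound on $\log Df^{q_n}$, where $p_n/q_n$ are the convergents of $\a$, and organizes $\T$ into the dynamical partitions $\PP_n$ at scale $q_n$. Refining the distortion bound to a cross-ratio (Koebe-type) estimate, and invoking the Diophantine inequality $q_{n+1}\leq Cq_n^{1+\sigma}$ to propagate these bounds across scales, one proves bounded geometry of $\PP_n$ uniformly in $n$. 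Comparing the atoms of $\PP_n$ for $f$ with those for $T_\a$ then yields quantitative control on $Dh$, and iterating the scheme scale by scale upgrades this to H\"older control of higher derivatives of $h$.

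Once $h\in C^s$ for some $s$ large, one smooths $h$ to a $C^\infty$-diffeomorphism $\tilde h$ close to $h$, so that $g:=\tilde h\circ f\circ \tilde h^{-1}$ is a small $C^\infty$-perturbation of $T_\a$. The smooth analogue of Arnold's local theorem (Theorem~\ref{theo:arnold}) produces a smooth conjugacy of $g$ to $T_\a$; composing and using the rigidity of the Denjoy conjugacy (unique up to translation) forces $h$ itself to be $C^\infty$. The hard part, and the essence of Yoccoz's advance over Herman, is securing the a priori estimate for \emph{every} Diophantine $\a$: Herman's original argument succeeded only under a more restrictive arithmetic condition, whereas Yoccoz's refinement rests on a delicate renormalization analysis that tracks how the nonlinearity of $f$ interacts with the continued fraction dynamics of $\a$, absorbing arbitrary Diophantine losses into the regularity exponent.
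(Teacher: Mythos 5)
Your skeleton --- Denjoy's topological conjugacy, dynamical partitions at the scales $q_n$, distortion estimates, propagation across scales via the Diophantine condition, then a bootstrap --- is the right one and matches the architecture of Yoccoz's proof as outlined in Section \ref{ss23}. But the sentence \lq\lq invoking the Diophantine inequality \dots\ one proves bounded geometry of the partitions uniformly in $n$\rq\rq\ is precisely the theorem, not a proof of it, and the mechanism that makes it work is absent from your write-up. The classical Denjoy bound $|\ln Df^{q_n}|\le \mathrm{Var}(\ln Df)$ does not concatenate across scales; what is needed is (i) the a priori estimate $|D^r\ln Df^{j}(x)|\le C\bigl[M_n^{1/2}/\beta_n(x)\bigr]^r$ for $j\le q_{n+1}$ (Proposition \ref{estimate}), (ii) the improved Denjoy inequality $|\ln Df^{q_n}|\le CM_n^{1/2}$, and above all (iii) the recursion of Proposition \ref{prop.main},
$$\Bigl|\beta_{n+1}(x)-\tfrac{\a_{n+1}}{\a_n}\beta_n(x)\Bigr|\le C\bigl[M_n^{(k-1)/2}\beta_n(x)+M_n^{1/2}\beta_{n+1}(x)\bigr],$$
whose proof uses the invariant measure identity $\int_x^{f^{q_n}(x)}(f^{q_{n+1}}-\mathrm{Id})\,d\mu=\int_x^{f^{q_{n+1}}(x)}(f^{q_n}-\mathrm{Id})\,d\mu$. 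Only then does the Diophantine bound $\a_{n+1}\ge C\a_n^{1+\beta}$, combined with $m_n\le\a_n\le M_n$, force the error term $CM_n^{(k-1)/2}$ to be eventually dominated by $\a_{n+1}/\a_n$, so that $M_n/\a_n$ and $\a_n/m_n$ stay bounded and Proposition \ref{c1} gives $C^1$-linearizability. Your proposal contains none of this quantitative content, and it is where the entire difficulty of extending Herman's result to all Diophantine numbers lives. Likewise the higher-regularity step is not a generic \lq\lq iteration scale by scale\rq\rq: it rests on Hadamard convexity inequalities interpolating between $\|f^{q_n}-\mathrm{id}-q_n\a\|_{C^\gamma}\le C\|q_n\a\|$ and $\|D^{r-1}\ln Df^{q_n}\|_{C^0}\le Cq_n^{(r-1)/2}$, followed by concatenation over $N=\sum b_nq_n$ and the compactness criterion of Section \ref{bounded}.

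Two further points. First, your closing claim that Yoccoz's advance over Herman \lq\lq rests on a delicate renormalization analysis\rq\rq\ is a misattribution: renormalization is Yoccoz's tool for the \emph{analytic} global theorem (condition $\mathcal H$, Theorem \ref{theo:yoccoz-analytic-global}); the smooth Diophantine case is proved entirely by the real-variable Denjoy-type estimates above. Second, your final reduction (smooth the conjugacy, apply the local theorem to $\tilde h\circ f\circ\tilde h^{-1}$, use uniqueness of the conjugacy up to rotation) is a legitimate way to conclude, but as written it is either redundant --- if your a priori estimate really gives $h\in C^{r-1-\sigma-\e}$ for every $r$, you are already done --- or underpowered, since a conjugacy of low regularity does not make $g$ close to $T_\a$ in a norm high enough for the local theorem. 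The paper concludes instead by establishing boundedness of the iterates $f^n$ in every $C^r$ norm.
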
 

It is clear that the Diophantine condition is optimal here since it is already optimal in the local case.

Yoccoz (as well as Herman) also proves a result
 for finitely differentiable diffeomorphisms.  In finite regularity, the conjugacy is less regular than the diffeomorphism: this phenomenon of loss of differentiability is typical of small divisors problems.

\begin{theo} [Herman \cite{He}, Yoccoz \cite{Y-smoothdiffeo}]\label{A} Any $f\in \Diff^r_{+}(\T)$ with $\rho(f)\in \mathcal{CD}(\s)\subset \mathcal{CD}$ 
and $\max(3,2\sigma+1)< r<\infty$ is $C^{r-\sigma-1-\eps}$-linearizable for any $\eps>0$. 
\end{theo}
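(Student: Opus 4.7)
The strategy is to build on the $C^0$ conjugacy $h$ provided by Denjoy's Theorem \ref{theo:denjoy} (available since $f\in C^r$ with $r\geq 3$) and to promote its regularity by means of global, non-perturbative a priori bounds on the closest-return iterates $f^{q_n}$, where $p_n/q_n$ are the convergents of $\a:=\rho(f)$. The arithmetic input is the Diophantine hypothesis $\a\in\mathcal{CD}(\s)$, equivalent to $q_{n+1}\leq \gamma^{-1}q_n^{1+\s}$ for some $\gamma>0$, which quantifies the admissible loss of regularity at each scale.

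The core dynamical step is to establish the uniform distortion bound
$$\sup_n \|\log Df^{q_n}\|_{C^0(\T)}<\infty, \qquad \|f^{q_n}-\id\|_{C^0}\lesssim 1/q_{n+1}.$$
The $C^0$-smallness is obtained via a Denjoy--Koksma type argument using that, combinatorially, the orbit $\{f^j(x)\}_{0\leq j<q_n}$ mimics the orbit of $T_\a$ under $h$. The uniform bound on $\|\log Df^{q_n}\|_{C^0}$ is the truly delicate point: it is a global distortion estimate that exploits the tower structure of the dynamical partitions associated with $f$ and the bounded variation of $\log Df$. Once $f^{q_n}$ is shown to stay in a $C^1$-bounded neighborhood of the rotations, polynomial-in-$q_n$ control on $\|f^{q_n}\|_{C^k}$ for $0\leq k\leq r$ follows from the chain rule and the boundedness of $\|f\|_{C^r}$.

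Hadamard--Kolmogorov interpolation between the $C^0$-smallness of $f^{q_n}-\id$ and the polynomial $C^r$-growth then yields quantitative smallness of $f^{q_n}-\id$ in $C^k$ for every $k<r-\s-1$. From here, $h$ can be reconstructed by solving the cohomological equation on each scale $1/q_n$ using that $f^{q_n}$ is near the translation $T_{q_n\a-p_n}$, and telescoping the resulting near-identity corrections over $n$; the Diophantine bound guarantees convergence of this series in $C^{r-\s-1-\varepsilon}$ for every $\varepsilon>0$. The condition $r>\max(3,2\s+1)$ is precisely what makes this bootstrap close: $r\geq 3$ activates Denjoy's theorem, while $r>2\s+1$ is what ensures a positive gain of regularity after the unavoidable $\s+1$ loss.

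The main obstacle is the global $C^1$-bound on $f^{q_n}$. In contrast to Arnold's Theorem \ref{theo:arnold}, no closeness of $f$ to a rotation can be assumed, so the a priori estimate must be extracted purely from the combinatorial structure of the dynamics together with the bounded variation of $\log Df$. This non-perturbative distortion control is the technical heart of the theorem and the main reason why the proof is substantially deeper than that of the local Theorem \ref{theo:yoccoz-analytic-local}.
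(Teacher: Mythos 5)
Your outline contains several correct ingredients (closest-return times $q_n$, Denjoy-type distortion, interpolation, the Diophantine condition quantifying the loss), but the step you present as the technical heart --- the uniform bound $\sup_n\|\log Df^{q_n}\|_{C^0}<\infty$ extracted from the combinatorics and the bounded variation of $\log Df$ --- is just the classical Denjoy estimate \eqref{eq:denjoy}. It holds for \emph{every} $C^2$ diffeomorphism with irrational rotation number, independently of any arithmetic, and it cannot drive the proof: Herman's Liouville counterexamples satisfy it while failing to be even absolutely continuously linearizable. Likewise, your asserted input $\|f^{q_n}-\id\|_{C^0}\lesssim 1/q_{n+1}$ is not a priori available; with $\beta_n(x)=|f^{q_n}(x)-x|$, $m_n=\min\beta_n$, $M_n=\max\beta_n$, one only knows $m_n\leq \a_n\leq M_n$, and the upper bound $M_n\lesssim\a_n$ is one of the main \emph{conclusions} of the hard part of the argument. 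What is actually needed, and what is missing from your proposal, is control of the geometry of the dynamical partition: showing that $M_n/m_n$ stays bounded, which by Proposition \ref{c1} yields the $C^1$-conjugacy. This rests on the \emph{improved} Denjoy inequality $|\ln Df^{q_n}|\leq CM_n^{1/2}$ and on the recursion $\bigl|\beta_{n+1}(x)-\tfrac{\a_{n+1}}{\a_n}\beta_n(x)\bigr|\leq C\bigl[M_n^{(k-1)/2}\beta_n(x)+M_n^{1/2}\beta_{n+1}(x)\bigr]$ of Proposition \ref{prop.main}; and it is precisely here, not in the interpolation step, that the hypothesis $r>2\s+1$ enters, since the error term $M_n^{(k-1)/2}$ must lose against $\a_{n+1}/\a_n\gtrsim\a_n^{\s}$ combined with $\a_n\leq M_n$.

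Two further gaps. First, polynomial-in-$q_n$ bounds on the higher derivatives of $f^{q_n}$ do not ``follow from the chain rule and the boundedness of $\|f\|_{C^r}$'': the chain rule alone gives exponential growth in $q_n$. The estimate $|D^r\ln Df^{j}(x)|\leq C\bigl[M_n^{1/2}/\beta_n(x)\bigr]^r$ of Proposition \ref{estimate} depends essentially on the disjointness of the intervals $I_n(x),\dots,f^{q_{n+1}-1}(I_n(x))$, which controls the sums $\sum_i(Df^i(x))^l$; this cancellation mechanism is absent from your sketch. Second, your concluding step (solving a cohomological equation at each scale and telescoping near-identity corrections) differs from the argument in the text, which instead interpolates to get $\|\ln Df^{mq_n}\|_{C^{\gamma_1}}\leq Cq_n^{-\eps}$, concatenates these over the expansion $N=\sum_n b_nq_n$ to bound \emph{all} iterates of $f$ in $C^{\gamma_1}$, and then invokes the compactness criterion of Section \ref{bounded}. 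Your alternative is too vague to assess, but in any case it cannot begin until the $C^1$-conjugacy --- the genuinely global, non-perturbative content of the theorem --- has been established, and that is the piece your proposal does not supply.
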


{ Other versions of linearization of circle diffeomorphisms with alternative proofs were later obtained by Khanin and  Sinai \cite{KS} (for a subclass of full measure of the Diophantine numbers) and by  Katznelson and Ornstein \cite{KO1,KO2} (for all Diophantine numbers) that give the best known loss of differentiability. } 
 
\subsection{Analytic linearization  --  Yoccoz' renormalization theory}

The local theorem
 was first proven in the analytic category (Arnold's result, Theorem \ref{theo:arnold}). As is often the case in KAM theory, the analytic category is easier to handle compared to the finite differentiability case.
This is not the case in the global theory of circle diffeomorphisms. For instance, as was discovered by Yoccoz, the local condition for linearizability turns out to be weaker than the one necessary for the non-perturbative linearization theorem. 

The extension to the analytic case of Herman-Yoccoz global theorem does not follow from the smooth case and Yoccoz had to design a new approach  to attack this problem, inspired by his work on the optimality of the Brjuno condition for the linearization of holomorphic germs \cite{Y-germs}.  He found  the optimal arithmetic condition  on the rotation number, which he named  condition $\mathcal{H}$ in honor of  Herman,  that insures  the analytic linearization of analytic orientation diffeomorphisms of the circle.  Indeed, for an arbitrary number that does not satisfy condition $\mathcal H$, Yoccoz  constructs examples of real analytic diffeomorphisms with this rotation number that are not analytically linearizable. 

Thus, if we simply define  the set $\mathcal{H}$ as the set of all $\a\in\T$ such that any $f\in \Diff^\omega_{+}(\T)$ with rotation  number $\a$  is analytically linearizable, then Yoccoz gave a full description of this set. Namely, let us introduce the set $\cH$ as follows. Let 
$$ {\mathcal R}_{\a}(r):= \begin{cases}&\a^{-1}(r-\ln \a^{-1}+1) {\rm \ for \ } r \geq \ln \a^{-1} \\&e^r {\rm \ for \ } r \leq \ln \a^{-1} \end{cases} $$ 
Next, define inductively 
$$R_{n+1}(\a)={\mathcal R}_{\a_n}(R_n(\a)), \quad R_0(\a)=0$$
where $\a_n=G^n(\a)$ and $G(x):=\{1/x\}$ denotes the Gauss map. 
Then, a Brjuno number satisfies  Condition ${\mathcal H}$ if for any $m$, there exists $k\geq 0$ such that
$$R_k(\a_m)\geq B(\a_{m+k}),$$ 
where $B$ is the Brjuno function.   Yoccoz then showed

\medskip 

\begin{theo}[Yoccoz \cite{Y-analyticdiffeo}]\label{theo:yoccoz-analytic-global}
$$\mathcal{CD}\subset \mathcal{H}\subset \mathcal{B}$$
and both inclusions are strict.
\end{theo}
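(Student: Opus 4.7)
The plan is to treat the two inclusions separately, using Yoccoz' renormalization scheme as the main technical tool. The inclusion $\mathcal{H}\subset\mathcal{B}$ is essentially immediate from Theorem~\ref{theo:yoccoz-analytic-counterexample}: if $\alpha\notin\mathcal{B}$, then there exists an arbitrarily small analytic perturbation of $T_\alpha$ that is not analytically linearizable, hence $\alpha\notin\mathcal{H}$ by the dynamical definition of $\mathcal{H}$.

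For $\mathcal{CD}\subset\mathcal{H}$, I would follow the strategy encoded in the very definition of the iterates $R_n$. Given $f\in\Diff^\omega_{+}(\T_\nu)$ with $\rho(f)=\alpha$, the renormalization operator --- essentially the rescaled first return map to a fundamental interval whose length is dictated by the convergents $p_n/q_n$ of $\alpha$ --- produces a sequence $(f_n)_{n\geq0}$ of analytic circle maps with $\rho(f_n)=\alpha_n=G^n(\alpha)$. The crucial quantitative input, to be established via a careful complex-analytic study of the return map, is that if $f_n$ extends injectively and holomorphically to a band of width $r$ (on which it stays close to a rotation), then $f_{n+1}$ extends to a band of width essentially $\mathcal{R}_{\alpha_n}(r)$; iterating this inequality, $f_{m+k}$ extends to a band of width at least $R_k(\alpha_m)$ once one starts at a step $m$ where $f_m$ is close enough to $T_{\alpha_m}$ (corresponding to $R_0(\alpha_m)=0$ in the initialization). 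The arithmetic condition defining $\mathcal{H}$ then says exactly that for every $m$ there is a $k$ so that the $(m+k)$-th renormalization lies in the regime of Theorem~\ref{theo:yoccoz-analytic-local2}, which yields an analytic conjugation of $f_{m+k}$ to $T_{\alpha_{m+k}}$; unwinding the renormalization tower, this conjugation is transported back to an analytic linearization of $f$. Given this dynamical reduction, the inclusion $\mathcal{CD}\subset\mathcal{H}$ becomes an arithmetic verification: for $\alpha\in\mathcal{CD}(\sigma)$, the iterates $\alpha_n$ stay Diophantine with uniform exponent, so $B(\alpha_n)$ is uniformly bounded, whereas the formula for $\mathcal{R}_\alpha$ shows that $R_k(\alpha_m)\to\infty$ (at least linearly in $k$, exponentially on subsequences where $\alpha_n$ is small). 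Hence $R_k(\alpha_m)\geq B(\alpha_{m+k})$ holds for all sufficiently large $k$, so $\alpha\in\mathcal{H}$. The main obstacle in this step is the first, analytic one: establishing sharply that band widths under renormalization really transform under $\mathcal{R}_{\alpha_n}$, which requires Denjoy-type a priori bounds combined with a detailed study of the complex extensions of first return maps.

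Both strictness statements are obtained by arithmetic constructions tailored to the $\mathcal{H}$-condition. For $\mathcal{CD}\subsetneq\mathcal{H}$, I would build $\alpha=[a_1,a_2,\ldots]$ alternating blocks: occasional very large partial quotients that force Liouville behavior, separated by sufficiently long stretches of small partial quotients along which $R_k$ grows quickly enough (essentially exponentially when $\alpha_n$ is small) to overtake $B(\alpha_{m+k})$ before the next large partial quotient appears; if the ``overtaking'' blocks are inserted after every bad event the $\mathcal{H}$-condition holds while $\alpha$ fails every Diophantine condition. For $\mathcal{H}\subsetneq\mathcal{B}$, one constructs, using renormalization in reverse, a Brjuno $\alpha$ and an analytic $f$ with $\rho(f)=\alpha$ whose successive renormalizations carry a genuine obstruction to linearization precisely at scales where $R_k(\alpha_m)$ stays below $B(\alpha_{m+k})$; the Brjuno summability of $\sum\log q_{n+1}/q_n$ is compatible with this failure provided the partial quotients are chosen to make $R_k$ lag behind $B$ at some initial time $m$. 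The delicate part throughout is the two-sided bookkeeping of $R_k$ and $B$ along the Gauss orbit.
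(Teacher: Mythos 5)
Your architecture coincides with the one the paper outlines in its renormalization section: the inclusion $\mathcal{H}\subset\mathcal{B}$ via the local counterexamples of Theorem~\ref{theo:yoccoz-analytic-counterexample}, the interpretation of $\mathcal{R}_{\a_n}$ as the law governing the width of the band of univalence under one renormalization step (with its two regimes, nonlinearity small or large compared to $\a_n$), the use of Theorem~\ref{theo:yoccoz-analytic-local2} once $R_k(\a_m)\geq B(\a_{m+k})$ puts some renormalization in the local regime, and the construction of Brjuno non-$\mathcal{H}$ counterexamples by inverting the renormalization. This is exactly the strategy the paper attributes to Yoccoz, so on the level of approach there is nothing to object to.

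There is, however, one step that fails as written. You claim that for $\a\in\mathcal{CD}(\sigma)$ the quantities $B(\a_n)$ are uniformly bounded. This is false except for numbers of bounded type: from the functional equation $B(x)=xB(\{x^{-1}\})+\log x^{-1}$ one gets $B(\a_n)\geq \log \a_n^{-1}\geq \log a_{n+1}$, and a Diophantine number of exponent $\sigma>0$ may well have $a_{n+1}\sim q_n^{\sigma/2}\to\infty$ along a subsequence, so $B(\a_n)\to\infty$ along that subsequence. Consequently the deduction \lq\lq $R_k(\a_m)\to\infty$, hence $R_k(\a_m)\geq B(\a_{m+k})$ for $k$ large\rq\rq\ does not follow from boundedness of the right-hand side; you must actually compare two divergent quantities. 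The repair is the growth comparison you postpone to the end: for Diophantine $\a$ one has $B(\a_{m+k})=O(\log q_{m+k+1})=O(\log q_{m+k})$, while once the recursion enters the linear regime $R_k(\a_m)$ dominates $\prod_{j<k}\a_{m+j}^{-1}\asymp q_k(\a_m)$, which grows at least geometrically; this is precisely the \lq\lq two-sided bookkeeping of $R_k$ and $B$ along the Gauss orbit\rq\rq\ that you flag as delicate, and it cannot be bypassed. For the two strictness statements, note that the paper already records the concrete arithmetic witnesses from Yoccoz' ICM address --- any $\a$ with $\log q_{n+1}=\mathcal{O}((\log q_n)^c)$ lies in $\mathcal{H}$ (and such $\a$ need not be Diophantine), while $e^{(a_n)^c}\leq a_{n+1}\leq e^{a_n}$ produces Brjuno numbers outside $\mathcal{H}$ --- which is sharper and more checkable than your block construction, though in the same spirit.
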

Yoccoz also gave a combinatorial description of $\mathcal{H}$ and showed in particular that it is an $F_{\s,\delta}$-set but not an $F_{\s}$-set (see \cite{Y-analyticdiffeo})\footnote{An $F_{\sigma}$-set is a countable union of closed sets. An $F_{\s,\delta}$-set is a countable intersection of $F_{\s}$-sets. }. 

{

In his 1994 ICM lecture \cite{Y-icm}, Yoccoz gives some examples that illustrate the difference between $\mathcal{CD}, \mathcal{H}$ and  
$\mathcal{B}$. If $\{p_n/q_n\}$ is the sequence of  convergents of $\a$, then $\a$ is Diophantine if, and only if,
$$\log(q_{n+1})=\mathcal{O}(\log q_n)\quad \forall n,$$
while any $\a$ with
$$\log(q_{n+1})=\mathcal{O}((\log q_n)^c)\quad \forall n,$$
for some $c>0$, belongs to $\mathcal{H}$. If $\{a_n\}$ are the coefficients in the continued fraction expansion of $\a$,
then  any $\a$ with
$$e^{(a_n)^c}\leq a_{n+1}\leq e^{a_n}\quad \forall n,$$
for some $0<c<1$, is always  Brjuno but never in $\mathcal{H}$.
}

This theorem implies in particular that there are angles $\a\in\mathcal{B}$ and
analytic orientation preserving circle diffeomorphisms $f$ with the rotation  number $\a$  that are not analytically linearizable.

\subsection{Beyond linearization. }

\subsubsection*{Density of linearization}
For $\a\in \T$, let $ \mathcal{F}_{\a}^\infty$ be the set of diffeomorphisms in $\Diff^\infty_{+}(\T)$ with rotation number $\a$, and let
$\mathcal{O}_{\a}^\infty\subset \cF_{\a}^\infty$ be the subset of $C^\infty$-linearizable diffeomorphisms.

The content of Herman-Yoccoz theorem is that if $\a$ is Diophantine, then $\cO_{\a}^\infty=\cF_{\a}^\infty$. 
What can be said for Liouville $\a$'s?

Herman proved that if
$\a$ is Liouville, then $\cO_{\a}^\infty$ is meager in $\cF_{\a}^\infty$ (see \cite[Chapter XI]{He}).

\begin{theo}[Yoccoz \cite{Y}]\label{theo:yoccoz-fo}For any Liouville $\a\in\T$, $\cF_{\a}^\infty=\overline{\cO_{\a}^\infty}$ (where the closure is for the $C^\infty$-topology).
\end{theo}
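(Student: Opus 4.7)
The plan, following the strategy of Fathi--Herman for approximations by linearizable diffeomorphisms, is to exploit the failure of the Diophantine condition for Liouville $\alpha$: for each $n$ we produce a rational approximation $p_n/q_n$ with $|\alpha - p_n/q_n|$ so small that we can replace a nearby rational rotation $T_{p_n/q_n}$ by $T_\alpha$ inside a smooth conjugacy at an arbitrarily small $C^\infty$ cost. Concretely, I would first construct a $C^\infty$-small perturbation $\tilde f_n$ of $f$ satisfying, on the lifted level, the codimension-$\infty$ condition
\begin{equation*}
\tilde f_n^{\,q_n} = T_{p_n},
\end{equation*}
so that every orbit of $\tilde f_n$ is periodic of period $q_n$. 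An explicit Fej\'er-type averaging along periodic orbits,
$$h_n(x) := \frac{1}{q_n}\sum_{k=0}^{q_n-1}\Bigl(\tilde f_n^{\,k}(x) - k\frac{p_n}{q_n}\Bigr),$$
then furnishes $h_n \in \Diff^\infty_+(\T)$ satisfying $\tilde f_n = h_n \circ T_{p_n/q_n} \circ h_n^{-1}$. Both $\|\tilde f_n - f\|_{C^k}$ and the norms $\|h_n\|_{C^k}$, $\|h_n^{-1}\|_{C^k}$ admit polynomial-in-$q_n$ bounds, with constants depending only on $\|f\|_{C^{k+O(1)}}$.

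Having obtained $h_n$, define $f_n := h_n \circ T_\alpha \circ h_n^{-1}$, which lies in $\cO_\alpha^\infty$ by construction and has rotation number $\alpha$. Writing $T_\alpha = T_{p_n/q_n} \circ T_{\alpha - p_n/q_n}$ and applying the mean value theorem to $h_n$, one gets
\begin{equation*}
\|f_n - \tilde f_n\|_{C^k} \;\leq\; C_k\,\|h_n\|_{C^{k+1}}^{a_k}\,\|h_n^{-1}\|_{C^{k+1}}^{b_k}\,|\alpha - p_n/q_n| \;=\; O\bigl(q_n^{B_k}\,|\alpha - p_n/q_n|\bigr).
\end{equation*}
Since $\alpha$ is Liouville, the approximations $p_n/q_n$ can be selected (along a subsequence if needed) so that $|\alpha - p_n/q_n|$ decays faster than $q_n^{-B_k}$ for every $k$. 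Combined with $\|\tilde f_n - f\|_{C^k} \to 0$, this yields $\|f_n - f\|_{C^k} \to 0$ for every $k$, so $f_n \to f$ in $C^\infty$ and $f \in \overline{\cO_\alpha^\infty}$.

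The main obstacle is the construction of $\tilde f_n$: producing a $C^\infty$-small perturbation of an arbitrary $f \in \cF_\alpha^\infty$ whose $q_n$-th iterate is \emph{exactly} the integer translation $T_{p_n}$, which is a codimension-$\infty$ condition. Here one leverages Denjoy's theorem (Theorem \ref{theo:denjoy}) to obtain a $C^0$ conjugacy $h_0 \circ f = T_\alpha \circ h_0$; iterating gives $\bar f^{q_n}(x) = h_0^{-1}(h_0(x) + q_n\alpha)$, and since $h_0$ commutes with integer translations, $\bar f^{q_n}(x) - x - p_n \to 0$ uniformly in $x$ as $q_n\alpha - p_n \to 0$, so the defect between $f^{q_n}$ and $T_{p_n}$ is at least $C^0$-small. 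One then modifies $f$ by surgery on a single fundamental domain of the quasi-periodic orbit $\{f^i(x_0)\}_{i<q_n}$, distributing the cancellation of the defect uniformly among the $q_n$ iterates, so that the $C^k$-size of each local modification is comparable to $|\alpha - p_n/q_n|$ times a factor polynomial in $q_n$ and $\|f\|_{C^{k+O(1)}}$. The companion polynomial control of $\|h_n\|_{C^k}$ and $\|h_n^{-1}\|_{C^k}$ follows from the averaging formula together with polynomial-in-$q_n$ bounds for the derivatives $(\tilde f_n^{\,j})^{(\ell)}$, $0 \le j < q_n$, the product of first derivatives around a full period being forced to $1$ by the identity $\tilde f_n^{\,q_n} = T_{p_n}$.
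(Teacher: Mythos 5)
There is a genuine gap, and it sits exactly where you placed the ``main obstacle'': the construction of a $C^\infty$-small perturbation $\tilde f_n$ of an \emph{arbitrary} $f\in\cF_\a^\infty$ with $\tilde f_n^{\,q_n}=T_{p_n}$ \emph{globally}. Denjoy's theorem only gives that $f^{q_n}-T_{p_n}$ is $C^0$-small (and the improved Denjoy inequality of Section \ref{denjoy} upgrades this to $C^1$), but in higher derivatives $f^{q_n}$ is in general \emph{not} close to a translation on all of $\T$: the distortion estimates of Proposition \ref{estimate} give $|D^r\ln Df^{q_n}(x)|\leq C\,(M_{n-1}^{1/2}/\beta_{n-1}(x))^r$, a bound that blows up wherever $\beta_{n-1}(x)$ is close to $m_{n-1}$, and for a non-linearizable $f$ the ratio $M_{n-1}/m_{n-1}$ is unbounded. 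Distributing over $q_n$ iterates a defect that is not $C^k$-small cannot yield a $C^k$-small surgery, so the codimension-$\infty$ condition $\tilde f_n^{\,q_n}=T_{p_n}$ cannot be imposed everywhere at small $C^\infty$ cost. This is precisely why Yoccoz's proof does \emph{not} make $f$ periodic: he perturbs $f$ into a \emph{quasi-rotation}, i.e.\ he arranges for $f^{q_n}$ and $f^{q_{n+1}}$ to be exact translations only on a small neighborhood of one well-chosen point $x_0$ where $\beta_{n-1}$ attains its maximum $M_{n-1}$ --- there, and only there, the above estimate becomes $|D^r\ln Df^{q_n}|\leq CM_{n-1}^{(k-1)/2-r}$, and even this requires the Liouville relation $\a_n\leq\a_{n-1}^{(k+1)/2}$ between consecutive scales (and a separate argument at scale $q_{n+1}$, where the Liouville relation is no longer available). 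Linearizability of the quasi-rotation is then deduced from the density of the negative orbit of $x_0$ together with the compactness criterion of Section \ref{bounded}, with no global flatness or global periodicity ever needed.

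A second, related gap is the claimed polynomial-in-$q_n$ control of $\|h_n\|_{C^k}$, $\|h_n^{-1}\|_{C^k}$ and of the derivatives $(\tilde f_n^{\,j})^{(\ell)}$, $j<q_n$. These quantities are governed by the geometry of the dynamical partition, i.e.\ by $m_n$ and $M_n$, and one only knows $m_n\leq 1/q_{n+1}$ with no lower bound of comparable size: $1/m_n$ need not be bounded by any power of $q_n$ for a non-linearizable $f$ (indeed boundedness of $M_n/m_n$ already implies $C^1$-linearizability by Proposition \ref{c1}, which is what one is trying to prove). Since $|\a-p_n/q_n|\asymp(q_nq_{n+1})^{-1}$ is dictated solely by the arithmetic of $\a$, the Liouville condition cannot absorb a loss of order $m_n^{-c_k}$ that is decoupled from that arithmetic; so even granting the existence of $\tilde f_n$, the comparison $\|f_n-\tilde f_n\|_{C^k}=O(q_n^{B_k}|\a-p_n/q_n|)$ with uniform exponents $B_k$ is unjustified. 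In short, your scheme defers the whole difficulty to a global periodization step that is strictly stronger than what is true, whereas the paper's proof succeeds by demanding translation behaviour of $f^{q_n}$ and $f^{q_{n+1}}$ only locally, at the unique scale and location where the distortion estimates actually deliver flatness.
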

In other words, any smooth orientation preserving diffeomorphism of the circle can be approximated in the smooth topology by smoothly linearizable ones.

The immediate  consequence of  Theorem \ref{theo:yoccoz-fo}  that 
a property that is dense in the $C^\infty$ closure $\overline{\cO^\infty_\a}$ is actually dense in $\cF_\a^\infty$, plays a crucial role in understanding the  generic properties of diffeomorphisms in $\cF_\a^\infty$. 
Indeed, many examples of "exotic" (far from linearizable) behaviors that appear for quasi-periodic  systems with  Liouville frequencies are built in the class of diffeomorphisms that are limits of conjugates to periodic translations. {The density of $\cO^\infty_\a$ in $\cF_\a^\infty$ then permits to show that the observed behavior is dense or even generic in all of $\cF_\a^\infty$}. An example is the  Theorem \label{theo:13} below on the centralizer of a generic diffeomorphism of $\cF_\a^\infty$, $\a$ Liouville. 

\subsubsection*{Centralizers}
Let $Z^\infty(f)$ be the $C^\infty$-centralizer of $f\in\Diff^\infty_{+}(\T)$, i.e. $Z^\infty(f)=\{g\in\Diff_{+}^\infty(\T), f\circ g=g\circ f\}$ and $Z^\infty_{0}(f)$ the closure for the $C^\infty$-topology of the group of iterates of $f$; obviously $Z_{0}^\infty(f)$ is a closed subgroup of $Z^\infty(f)$.  By Herman-Yoccoz Theorem, when $\a\in{\mathcal C}{\mathcal D}$, for {\it all} $f\in F_{\a}^\infty$, $Z_{0}^\infty(f)=Z^\infty(f)$ and is indeed equal to $\{h\circ T_{\beta}\circ h^{-1}, \beta \in \T\}$ where $h$ is the linearizing diffeomorphism of $f$; in particular it is uncountable. When $\a$ is Liouville, Yoccoz proves the following theorem
\begin{theo}[Yoccoz \cite{Y}]\label{uncountable}
For any Liouville  $\a$, the generic diffeomorphism $f$ in $\cF_\a^\infty$ is such that  $Z^\infty_{0}(f)$ is uncountable. 
\end{theo}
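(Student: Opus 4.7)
The plan is to recast the uncountability of $Z^\infty_0(f)$ as a $G_\delta$ condition on $f$ and then use Theorem~\ref{theo:yoccoz-fo} to establish its density. The pivotal observation is that $\Diff^\infty_+(\T)$ is a Polish topological group (its $C^\infty$ topology is induced by a countable family of seminorms, and composition and inversion are continuous), so any countable closed subgroup $H \subset \Diff^\infty_+(\T)$ must be discrete: if $H$ were not discrete, translation homogeneity would imply that no point of $H$ is isolated, each singleton would be nowhere dense in $H$, and a countable union of nowhere dense sets could not cover $H$ by Baire. Applied to $H = Z^\infty_0(f)$, this tells us that $Z^\infty_0(f)$ is uncountable if and only if it is non-discrete, if and only if there exists a sequence $n_j \in \Z \setminus \{0\}$ with $f^{n_j} \to \id$ in $C^\infty$.

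First I introduce, for integers $k, m \geq 1$,
\[
G_{k,m} = \bigl\{\, f \in \cF^\infty_\a : \exists\, n \in \Z \setminus \{0\} \text{ with } \|f^n - \id\|_{C^k} < 1/m \,\bigr\}.
\]
For each fixed $n$ the map $f \mapsto f^n$ is continuous in the $C^\infty$ topology, so each set $\{f : \|f^n - \id\|_{C^k} < 1/m\}$ is open, and $G_{k,m}$ is open as the union of such sets over $n \neq 0$. The intersection $\RR := \bigcap_{k,m \geq 1} G_{k,m}$ is therefore a $G_\delta$, and by the characterization above it coincides exactly with the set of $f \in \cF^\infty_\a$ for which $Z^\infty_0(f)$ is uncountable. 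To conclude it suffices to verify density of each $G_{k,m}$.

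For this I show the stronger inclusion $\cO^\infty_\a \subset \RR$ and combine it with Theorem~\ref{theo:yoccoz-fo}. If $g \in \cO^\infty_\a$, write $g = h \circ T_\a \circ h^{-1}$ with $h \in \Diff^\infty_+(\T)$, so that $g^n = h \circ T_{n\a} \circ h^{-1}$. Since $\a$ is Liouville, one may choose $n_j \to \infty$ with $\|n_j \a\|_\T \to 0$; then $T_{n_j \a} \to \id$ in $C^\infty$, and continuity of conjugation by the fixed $h$ yields $g^{n_j} \to \id$ in $C^\infty$, so $g \in G_{k,m}$ for every $k, m$. Since by Theorem~\ref{theo:yoccoz-fo} the set $\cO^\infty_\a$ is $C^\infty$-dense in $\cF^\infty_\a$, each open $G_{k,m}$ is dense, and $\RR$ is a dense $G_\delta$ in $\cF^\infty_\a$. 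The real depth of the argument is borrowed entirely from Theorem~\ref{theo:yoccoz-fo}: the Liouville density of linearizable diffeomorphisms is where the content of the Liouville regime is concentrated, and once it is available the present statement becomes a Baire-theoretic formality whose only delicate point is the identification of uncountability with non-discreteness in a Polish group.
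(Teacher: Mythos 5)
Your proof is correct, and it follows exactly the route the paper indicates for this result: the surrounding discussion states that Theorem~\ref{theo:yoccoz-fo} (density of $\cO_\a^\infty$ in $\cF_\a^\infty$ for Liouville $\a$) is precisely what ``permits to show that the observed behavior is dense or even generic,'' with the centralizer theorem cited as the example, and your reduction of uncountability of the closed subgroup $Z^\infty_0(f)$ to non-discreteness, followed by the open dense sets $G_{k,m}$ and the inclusion $\cO_\a^\infty\subset\bigcap_{k,m}G_{k,m}$, implements that strategy faithfully (the paper itself gives no further detail). The only cosmetic remark is that choosing $n_j$ with $\|n_j\a\|\to 0$ uses only the irrationality of $\a$, not the Liouville property, which enters solely through Theorem~\ref{theo:yoccoz-fo}.
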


 Yoccoz has also obtained other important results on centralizers for diffeomorphisms in
$$\cF_{I}^\infty=\bigcup_{\a\notin\Q} F^\infty_{\a}.$$
\begin{theo}[Yoccoz \cite{Y}]\label{theo:13} For a  generic set of  $f\in F^\infty_{I}$ one has 
$$Z^\infty(f)=Z^\infty_{0}(f).$$
 Moreover for a dense set of $f\in F^\infty_{I}$ one has 
 $$Z^\infty(f)=Z^\infty_{0}(f)=\{f^n, n\in\Z\}.$$  
 \end{theo}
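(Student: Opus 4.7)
The plan rests on a rigidity principle. For $f \in \cF^\infty_I$, unique ergodicity of $f$ forces every $g \in Z^\infty(f)$ to preserve the unique $f$-invariant measure $\mu_f$; composing with the Denjoy--Poincar\'e $C^0$-conjugacy $h \circ f = T_{\rho(f)} \circ h$ shows that $h \circ g \circ h^{-1}$ commutes with $T_{\rho(f)}$ in $\mathrm{Homeo}_+(\T)$, and hence equals $T_{\rho(g)}$ (the centralizer of an irrational rotation in the homeomorphism group being the rotation subgroup). Thus $g = h^{-1} \circ T_{\rho(g)} \circ h$ and $\rho : Z^\infty(f) \to \T$ is an injective group homomorphism, so every centralizer element is determined by its rotation number. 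The whole question reduces to controlling the subgroup $\rho(Z^\infty(f)) \subset \T$ together with the smoothness of the corresponding conjugates of rotations.

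For the first (generic) assertion I would proceed in two steps. First, for any $C^\infty$-linearizable $f = h^{-1} T_\alpha h$ with $h \in \Diff^\infty_+(\T)$, one has $Z^\infty(f) = \{h^{-1} T_\beta h : \beta \in \T\}$; since $\Z \alpha$ is dense in $\T$ and $\beta \mapsto h^{-1} T_\beta h$ is $C^\infty$-continuous, the iterates $\{f^n\}$ are $C^\infty$-dense in this circle group, yielding $Z^\infty_0(f) = Z^\infty(f)$. The set of linearizable $f$ is $C^\infty$-dense in $\cF^\infty_I$ by \thmref{theo:herman-yoccoz} on the (dense) Diophantine stratum and \thmref{theo:yoccoz-fo} on each Liouville stratum $\cF^\infty_\alpha$. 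Second, to upgrade density to a $G_\delta$, for each $k, n \in \N$ consider the set $V_{k,n}$ of $f$ such that every $g \in Z^\infty(f)$ with $\|g\|_{C^k} \le k$ lies within $C^k$-distance $1/n$ of some iterate $f^m$. Openness of $V_{k,n}$ follows from $C^\infty$-closedness of the commutation relation (limits of commuting pairs commute) combined with compactness of $C^k$-bounded sets in $C^{k-1}$, and density follows from the first step applied to any $f_0 \notin V_{k,n}$; the intersection of the $V_{k,n}$ is the desired residual set.

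For the second (dense) assertion, the target class is disjoint from the linearizable one: the equality $Z^\infty(f) = \{f^n : n \in \Z\}$ forces $f$ to be non-linearizable and $\rho(f)$ to be Liouville. The natural construction is of Anosov--Katok type: given $f_0 \in \cF^\infty_I$ and $\epsilon > 0$, build $f = \lim_n H_n T_{p_n/q_n} H_n^{-1}$ with $p_n/q_n \to \alpha$ Liouville, choosing inductively $H_{n+1} = H_n \circ R_n$ with $R_n$ commuting with $T_{p_n/q_n}$, arranging that $f \in \cF^\infty_\alpha$ is $C^\infty$-close to $f_0$ (with the Liouville arithmetic of $\alpha$ permitting the requisite speed of convergence). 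The principal obstacle, and the technical heart of the theorem, is calibrating the speed of $p_n/q_n \to \alpha$ against the complexity of the $R_n$ so that any candidate smooth centralizer element $g = h^{-1} T_\beta h$ with $\beta \notin \Z \alpha$ has its $C^k$-norm blowing up at some level $k$; this requires simultaneously keeping control over iterates of $f$ (to remain $C^\infty$-close to $f_0$) and ruling out smoothness of $h^{-1} T_\beta h$ for every $\beta \notin \Z \alpha$, by exploiting the singularity of the limit conjugacy $h = \lim H_n$ in directions transverse to the orbit of $\alpha$.
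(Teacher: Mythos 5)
The survey states this theorem without proof, so your plan has to be measured against what a complete argument (Yoccoz's in \cite{Y}) must actually deliver. Your opening reduction is correct and is indeed the standard starting point: unique ergodicity plus Denjoy's theorem shows every $g\in Z^\infty(f)$ has the form $h^{-1}\circ T_{\rho(g)}\circ h$, so $\rho$ is injective on the centralizer. The genericity scheme, however, has a genuine gap at the openness step. If $f_j\to f$ and $g_j\in Z^\infty(f_j)$ with $\|g_j\|_{C^k}\le k$, a subsequential $C^{k-1}$-limit $g$ does commute with $f$, but it is a priori only of class $C^{k-1}$ (and you control neither $Dg_j$ from below nor $g_j^{-1}$), so it need not belong to $Z^\infty(f)$ and therefore does not witness $f\notin V_{k,n}$: the complement of $V_{k,n}$ is not shown to be closed. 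To run a Baire argument one must define the open sets in terms of finitely differentiable centralizers, keeping two regularity indices separate, and then relate the $C^r$-centralizers back to the $C^\infty$ one. There is also a quantifier slip: a fixed $g\in Z^\infty(f)$ may have $\|g\|_{C^k}>k$ for every $k$, in which case membership of $f$ in all the $V_{k,n}$ says nothing about $g$; the norm bound must be an index decoupled from the differentiability order.

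For the dense statement the proposal is a program rather than a proof: all of the difficulty is concentrated in the sentence about calibrating the construction so that every candidate $h^{-1}\circ T_\beta\circ h$ with $\beta\notin\Z\alpha+\Z$ fails to be smooth, and no mechanism is offered for excluding \emph{all} such $\beta$ simultaneously. Worse, the specific scheme you propose --- $f=\lim H_nT_{p_n/q_n}H_n^{-1}$ with $R_n$ commuting with $T_{p_n/q_n}$ and fast convergence --- is precisely the construction whose natural output is a \emph{large} centralizer closure: the roots $H_{n+1}\circ T_{j/q_{n+1}}\circ H_{n+1}^{-1}$ accumulate on elements of $Z^\infty_0(f)$, which is essentially how one obtains Theorem \ref{uncountable} (generic uncountability of $Z^\infty_0(f)$ for Liouville $\alpha$). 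Producing a trivial centralizer therefore requires working against the tendency of the method, for instance by prescribing a strongly non-linearizable geometry of the dynamical partitions (in terms of the quantities $m_n$, $M_n$ of Section \ref{ss23}) together with an argument that any smooth commuting $g$ must respect that geometry and hence have $\rho(g)\in\Z\alpha+\Z$. That exclusion argument is the heart of the theorem and is missing from the proposal.
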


\bigskip 

\medskip

\section{Conceptual and technical  tools}
We describe in this section some important concepts and tools used in the study of quasi-periodic systems and in particular in the theory of circle diffeomorphisms.
\subsection{Local aspects: KAM theory}\label{sec:kam}

Let $f(x)=x+\a+v(x)=T_\a(x)+v(x)$ be (the lift of)  a smooth  orientation preserving diffeomorphism of the circle of class $C^r$ (then $v$ is a 1-periodic $C^\infty$ function defined on  $\R$). Let $\rho(f)=\a$, and let us assume  that that $f$ is close to the translation $T_\a$, or equivalently that $v$ is small, in some $C^r$-topology. We look for a conjugating map $h$ of the form $x\mapsto x+w(x)$, with $w$ $1$-periodic and small in some $C^r$-norm, such that $h\circ f= T_{\a}\circ h$, i.e.
$$ x+\a+v(x)+w(x+\a+v(x))=x+\a+w(x).
$$
Up to {\it higher order terms} we must have
$$w(x+\a)-w(x)=-v(x)+O_{2}(v,w)$$
where the term $O_{2}(v,w)$ involves quadratic terms in $v,w$ and their derivatives. 
Conversely if $w$ satisfies
\be w(x+\a)-w(x)=-v(x) \tag{$\cL$} \label{cohomeq}
\ee
then with $h(x)=x+w(x)$ one has a conjugacy 
\be h\circ f=(T_{\a}+v_{2})\circ h \tag{$\mathcal C$} \label{onestepeq}
\ee
where $v_{2}=O_{2}(v,w)$ is quadratic  in $v,w$ and their derivatives.
The { linearized equation} (\ref{cohomeq}) is  a  {\it cohomological} equation which is  traditionally  called the {\it homological equation}. It can be solved by using Fourier series: if $v(x)=\sum_{k\in\Z}\hat v(k)e^{2\pi i kx}$, $w(x)=\sum_{k\in\Z}\hat w(k)e^{2\pi i kx}$ one must have $\hat v(0)=0$ and 
$$\forall k\in\Z^*,\ \hat w(k)=\frac{\hat v(k)}{e^{2\pi k\a}-1}.$$
To overcome the {\it small divisors} problem caused by the possibly small (or zero) denominators in the previous equation, a Diophantine  assumption has to be made on $\a$. If $\a\in \mathcal{CD}(\g,\s)$, then one gets
$$|\hat w(k)|\lesssim \gamma^{-1}|k|^{2+\sigma} |\hat v(k)|.$$
Under this condition (and the assumption that $\hat v(0)=0$) one can get a smooth $w$ solving (\ref{cohomeq}), but with a {\it loss of differentiability}: to get a control on the $C^s$-norm of $w$ one needs for example  a control on the $C^{s'}$-norm of $v$ for $s'>s+3+\sigma$ (this is not optimal). Strictly speaking, in equation (\ref{cohomeq}) $v$ has to be changed in $v-\hat v(0)$ to ensure that $\hat v(0)=0$ and as a consequence in equation (\ref{onestepeq}) $\a$ has to be changed to $\a+\hat v(0)$ but it can be seen that the condition $\rho(f)=\a$ implies $\hat v(0)=O_{2}(v)$ is quadratic in $v$ and its derivative.

To conclude, if $\a$ is Diophantine (in $\mathcal{CD}(\gamma,\sigma)$) one can solve equation (\ref{cohomeq}) (with loss of derivatives) and get (\ref{onestepeq}) with $v_{2}=O_{2}(v,w)$. Since  $\|w\|_{C^s}\lesssim \gamma^{-1}\|v\|_{C^{s+\sigma+3}}$ (for short $w=O_{1}(v)$) one has $v_{2}=O_{2}(v)$ where $O_{2}(v)$ means quadratic  with respect to $v$ and its derivatives.

The next step is  to iterate the preceding step leading to equation (\ref{onestepeq}): this way one gets sequences $v_{n},w_{n}$ with $w_{n}=O_{1}(v_{n})$, $v_{n+1}=O_{2}(v_{n})$ and 
$$  (id+w_{n})\circ\cdots \circ(id+w_{1}) \circ f=(T_{\a}+v_{n})\circ (id+w_{n})\circ\cdots\circ(id+w_{1})
$$
The main goal is to prove that for each fixed $r$,  $\|v_{n}\|_{C^r}$ and $\|w_{n}\|_{C^r}$ go to zero fast enough to ensure that the sequence $(id+w_{n})\circ\cdots\circ(id+w_{1})$ converges in the $C^\infty$-topology to some $h$ and that at the end $h\circ f=T_{\a}\circ h$. This is where the {\it quadratic} convergence of the scheme (Newton iteration scheme) is crucial: {combined with a {\it truncation procedure} it allows }  to overcome the {\it loss of derivatives} phenomenon.

The scheme we have described is the prototypical example of a KAM scheme. It can be used in many situations but its weaknesses are the following: it can only be applied in {\it perturbative situations},  like in Theorem \ref{theo:arnold}; the smallness of the allowed perturbation  is related to the Diophantine condition and, hence,  can hold only for a set of {\it positive Lebesgue measure} of rotation numbers.
\subsection{Global aspects: Linearizability as a compactness result}\label{bounded}

Let's start with an abstract result.
A deep theorem of Gleason-Montgomery-Zippin \cite{MZ} asserts  that a locally compact topological group without small subgroups (which means that there exists a neighborhood of the identity containing no other topological subgroup than the one reduced to the identity) can be endowed with the structure of a Lie group. The case where the group is compact is much easier to prove. The interesting thing is that groups of diffeomorphisms on a compact manifold have the property of being without small subgroups. As a consequence, if $f$ is a diffeomorphism on a compact manifold such that its iterates form a relatively compact set for the $C^r$-topology, $r\geq 1$, the closure of the group of its iterates form a compact abelian Lie group and (its connected component containing the identity) is basically a finite dimensional torus. It is thus natural to expect in this case quasi-periodicity of the diffeomorphism. 
To illustrate this in a more concrete situation,  we observe that  if a smooth diffeomorphism $f$ of the circle is $C^r$-linearizable then its iterates $f^n$, $n\in\Z$ form a relatively compact set for the $C^r$-topology. Conversely, Herman proved \cite{He} that if the iterates of an orientation preserving diffeomorphism $f$ form a relatively compact set in the $C^r$-topology then $f$ is  $C^r$-linearizable. In this case the conjugating map is more or less explicit: define
$$h_{n}=\frac{id+f+\cdots+f^{n-1}}{n}$$ and observe that 
$$h_{n}\circ f\circ h_{n}^{-1}=id+\frac{f^n-id}{n}\circ h_{n}^{-1}.$$
From the definition of the rotation number, the right hand side of the equation converges (uniformly) as $n$ goes to infinity to $T_{\rho(f)}$. On the other hand, since the iterates $f^n$ are bounded in the $C^r$-topology, one can extract a sequence $n_{k}$ such that $h_{n_{k}}$ converges in the $C^{r-1}$-topology to some $C^r$ diffeomorphism $h$ that, as Herman shows, turns out to be actually $C^r$, which implies the $C^r$-linearization.

Compactness criteria are also useful in the analytic case (for holomorphic germs or analytic diffeomorphisms of the circle) but then take the form of topological stability: an orientation preserving analytic diffeomorphism of the circle $f$ is analytically linearizable if the real axis is Lyapunov stable under iterates of (a complex extension) of $f$: to ensure that the iterates of a point under $f$ stay in any given neighborhood of the real axis it is enough to choose this point in a small enough neighborhood of  the real axis. 
 The proof of this geometric compactness criterium follows from the previous construction and Montel's theorem or from an argument of conformal representation.

\subsection{Global aspects: Importance of the geometry in the proof of Herman-Yoccoz Theorem}\label{ss23}

{ For $\a \in (0,1) \setminus \Q$, let $p_n/q_n$ denote the convergents of $\a$.
The topological conjugacy of  Theorem \ref{theo:denjoy} obtained by Denjoy  relies on the following estimate of the growth of the derivatives of $f$ 
\begin{equation}\vert \ln Df^{q_n}(x) \vert \leq {\rm Var}(\ln Df) \quad \forall x\in\T.\label{eq:denjoy}\end{equation}
This estimate in turn relies on the same ordering of the orbits of $f$ and those of $T_\a$, and on Koksma's inequality: {\it for any function $\varphi$ of bounded variation }
$$ |\sum_{j=0}^{q_n-1} \varphi(x+j\a) -  \sum_{j=0}^{q_n-1} \varphi(y+j\a)| \leq {\rm Var}(\varphi)\quad \forall x,y\in\T.$$

 The idea of Herman for obtaining the $C^r$-boundedness of the iterates for a smooth diffeomorphism of Diophantine rotation number was to start by proving a $C^r$-version of Denjoy's estimate for iterates at times $q_n$. Then, he would concatenate the resulting bounds using the adequate arithmetic conditions.

 In the remaining part of this section  \ref{ss23} we briefly describe Yoccoz'  work  \cite{Y-smoothdiffeo} where the same strategy is followed to prove the global linearization theorem in the smooth category for every Diophantine rotation number. A very interesting feature of Yoccoz' work is the  neat and efficient separation between the Denjoy-like estimates that he obtains for a circle diffeomorphism with irrational rotation number, regardless of its arithmetics, and the linearizability consequences that follow if Diophantine properties are thrown in. Moreover, this separation allows to use the same clear cut bounds on the growth of the derivatives, at the special times $q_n$, to study the Liouville case. Namely, the crucial estimate that we will present in Section \ref{sec:estimate}  and {in Section \ref{main} }
 yield the linearizability result of Theorem \ref{A} in the Diophantine case, as well as the density of the linearizable diffeomorphisms of Theorem \ref{theo:yoccoz-fo}  in the Liouville case. This will be a  brief presentation  and we refer to the excellent text \cite{Y-smoothdiffeo} for more details.}
 
We recall the set-up. Let $f$ be a smooth circle diffeomorphism with rotation number $\al$ non-rational. Let $p_n/q_n$ be the convergents of $\a$ and let
 $$\a_n= |q_n \a-p_n|.$$
We denote by $C$ an arbitrary constant that is independent of $n$ and $x\in\T$ (but may depend on $f$ and $r$). 

\subsubsection{Dynamical partitions of the circle and a criterion for $C^1$ conjugacy}\label{ ssA}

For any $x \in \T$, let for $n$ even 
$$I_n(x)=(x,f^{q_n}(x))\quad\mathrm{and}\quad I_{n+1}(x)=(f^{q_{n+1}}(x),x)$$ (for $n$ odd the positions around $x$ are reversed).  From the topological conjugation of $f$ to $T_\a$ (Denjoy) we have that the intervals 
$$I_n(x),\ldots,f^{q_{n+1}-1}(I_n(x)),I_{n+1}(x),\ldots,f^{q_{n}-1}(I_{n+1}(x))$$ 
form a partition of the circle (up to the endpoints of the intervals). 
A criterion of $C^1$-linearization is that these intervals have a comparable size. 

Indeed, let {
$$\beta_n(x)=|I_n(x)|$$
and let 
$$m_n=\min_{x \in \T} \beta_n(x)\le M_n=\max_{x \in \T} \beta_n(x).$$
}
\begin{prop} \label{c1}
If the sequence $(\frac{M_n}{m_n})$ is bounded, then $f$ is $C^1$-conjugated to the rotation $T_\a$.
\end{prop}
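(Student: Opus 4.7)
The strategy is to use the geometric hypothesis $M_n/m_n\leq K$ together with Denjoy-type bounded-distortion estimates in order to derive a \emph{uniform} bound $c\leq Df^j(x)\leq C$ valid for every $j\geq 1$ and every $x\in\T$. Once this is achieved, the compactness/averaging argument recalled in Section~\ref{bounded} (Herman's trick) automatically delivers a $C^1$-conjugacy between $f$ and $T_\a$.

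Let $V:=\operatorname{Var}(\log Df)<\infty$ (finite since $f$ is smooth). Given $j\geq 1$, choose the smallest integer $n$ with $q_{n+1}\geq j$. For any $x\in\T$, the intervals $I_n(x), f(I_n(x)), \dots, f^{q_{n+1}-1}(I_n(x))$ are pairwise disjoint, as they all belong to the dynamical partition at level $n$ recalled above. Hence
$$\operatorname{Var}_{I_n(x)}(\log Df^j)\;\leq\;\sum_{i=0}^{j-1}\operatorname{Var}_{f^i(I_n(x))}(\log Df)\;\leq\;V.$$
On the other hand $f^j(I_n(x))=I_n(f^j(x))$, and the mean value theorem produces a point $\xi\in I_n(x)$ with
$$Df^j(\xi)\;=\;\frac{|I_n(f^j(x))|}{|I_n(x)|}\;=\;\frac{\beta_n(f^j(x))}{\beta_n(x)}\;\in\;\left[\frac{m_n}{M_n},\frac{M_n}{m_n}\right]\;\subset\;[K^{-1},K].$$
The two bounds combine to give $Df^j(y)\in[K^{-1}e^{-V},\,Ke^{V}]$ for every $y\in I_n(x)$, in particular at $y=x$. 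Since $x\in\T$ was arbitrary and the constants depend only on $f$ and $K$, the family $\{f^j\}_{j\geq 1}$ is uniformly bounded in $C^1$.

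With this uniform $C^1$-bound the family $\{f^j\}$ is equicontinuous, so the Cesaro averages $h_N:=\tfrac1N\sum_{i=0}^{N-1}f^i$ (taken on the lift) are bounded in $C^1$ and admit a subsequence $h_{N_k}$ converging in $C^0$ to some map $h$. Passing to the limit in the identity
$$h_N\circ f=\left(\mathrm{id}+\tfrac{f^N-\mathrm{id}}{N}\right)\circ h_N$$
and using the uniform convergence $(f^N-\mathrm{id})/N\to\rho(f)=\a$ (by definition of the rotation number) yields $h\circ f=T_\a\circ h$; Herman's argument recalled in Section~\ref{bounded} then upgrades $h$ from a Lipschitz map to a genuine $C^1$-diffeomorphism. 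The main obstacle is the first step: the hypothesis $M_n/m_n\leq K$ constrains the geometry only at the return times $q_n$, while the conclusion requires a bound for \emph{all} iterates of $f$. The bridge is the optimal matching $q_{n+1}\geq j$, which supplies enough disjoint iterates of $I_n(x)$ for Koksma's inequality to turn the geometric control into a distortion bound on $f^j$.
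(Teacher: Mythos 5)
Your proof is correct and follows essentially the same route as the paper: the mean value theorem applied to $f^j$ on the dynamical partition interval $I_n(x)$ produces a point $\xi$ with $Df^j(\xi)=\beta_n(f^j(x))/\beta_n(x)\in[K^{-1},K]$, and Herman's compactness criterion then converts the uniform $C^1$-bound on the iterates into a $C^1$-conjugacy. The only divergence is in transferring the bound from $\xi$ to $x$: the paper simply fixes $j$ and lets $n\to\infty$, so that $\xi\to x$ and continuity of $Df^j$ gives $Df^j(x)\in[K^{-1},K]$ directly, whereas your Denjoy--Koksma distortion step at the scale $q_{n+1}\geq j$ is correct but unnecessary and yields the slightly weaker constants $K^{\pm 1}e^{\pm V}$.
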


\begin{proof} Let $i\in \N$ and $x \in \T$. For any even $n$, there exists $\xi \in I_n(x)$ such that $\beta_n(f^i(x))=Df^i(\xi)\beta_n(x)$. Hence  
$|Df^i(\xi)|\in [C^{-1}, C]$, where $C$ is a bound for $\frac{{M}_n}{{m}_n}$, and as $n$ tends to infinity this implies that $|Df^i(x)|\in [C^{-1},C]$. Since $i$ and $x$ were arbitrary, this shows that the iterates of $f$ are bounded in $C^1$-norm and implies the $C^1$-linearization. \end{proof}

\subsubsection{Distorsion estimates: Cancellations and estimates on the derivatives growth.} \label{sec:estimate}

The crucial estimate on the growth of derivatives is due to Denjoy-like distortion bounds based on formulas involving the chain rule for the Schwarzian derivatives,  as well as on the following simple observation
that 
$$I_n(x),\ldots,f^{q_{n+1}-1}(I_n(x))$$
are disjoint intervals of the circle. 

\medskip 

{\noindent \bf Lemma.} {\it For any $r \in\N^*$ and  any  $1\leq l\le r$ 
$$\sum_{i=0}^{q_{n+1}-1} (Df^i(x))^l \leq C \frac{M_n^{l-1}}{\beta_n(x)^l}\quad \forall x\in \T.$$}

\medskip

\begin{prop}\label{estimate}  For any $r \in \N$ and any  $j\leq q_{n+1}$ we have 
$$|D^r \ln Df^j(x)|\leq C \left[\frac{M_n^{\frac{1}{2}}}{\beta_n(x)} \right]^r\quad \forall x\in \T.$$
\end{prop}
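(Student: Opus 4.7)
The plan is to induct on $r$, combining the Faà di Bruno formula applied to $\ln Df^j(x)=\sum_{i=0}^{j-1}\ln Df(f^i x)$ with the preceding Lemma. Writing $\psi_k=D^k\ln Df$ (a smooth, bounded function on $\T$) and $L_i=\ln Df^i$, Faà di Bruno gives
$$
D^r\ln Df^j(x)=\sum_{i=0}^{j-1}\sum_{\pi}c_\pi\,\psi_{|\pi|}(f^ix)\prod_{B\in\pi}(f^i)^{(|B|)}(x),
$$
where $\pi$ ranges over partitions of $\{1,\dots,r\}$ and $c_\pi$ are combinatorial constants. The Bell-polynomial identity $(f^i)^{(m)}(x)=Df^i(x)\cdot B_{m-1}(L_i^{(1)},\dots,L_i^{(m-1)})(x)$, obtained by iterating $(f^i)''=Df^i\cdot DL_i$, then rewrites each factor as $Df^i(x)$ times a universal polynomial in the lower derivatives $L_i^{(l)}$ with $l<r$.

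Assuming inductively that $|L_i^{(l)}(x)|\le C[M_n^{1/2}/\beta_n(x)]^l$ for every $l<r$ and every $i\le q_{n+1}$, a partition $\pi$ with $k$ blocks of sizes $m_1,\dots,m_k$ summing to $r$ contributes, for each $i$, a quantity bounded by $C(Df^i(x))^k[M_n^{1/2}/\beta_n(x)]^{r-k}$. Summing over $i$ and applying the Lemma with $l=k$ yields
$$
C\,\frac{M_n^{k-1}}{\beta_n(x)^k}\cdot\Bigl(\frac{M_n^{1/2}}{\beta_n(x)}\Bigr)^{r-k}=C\,\frac{M_n^{(r+k)/2-1}}{\beta_n(x)^r}\le C\Bigl(\frac{M_n^{1/2}}{\beta_n(x)}\Bigr)^r
$$
as soon as $k\ge 2$. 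For the single-block partition ($k=1$), the contribution is $\sum_i\psi_1(f^ix)\,Df^i(x)\,B_{r-1}(L_i^{(1)},\dots,L_i^{(r-1)})(x)$; pulling out the inductive bound $|B_{r-1}|\le C[M_n^{1/2}/\beta_n(x)]^{r-1}$ reduces this to a multiple of $|D\ln Df^j(x)|$, so the entire inductive step collapses to the case $r=1$.

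The substantive step, and the main obstacle, is therefore the base case $|D\ln Df^j(x)|\le CM_n^{1/2}/\beta_n(x)$: a direct application of the Lemma with $l=1$ only yields the weaker $C/\beta_n(x)$, so the extra factor $M_n^{1/2}$ must be extracted by a more delicate argument. The natural strategy is to use bounded distortion to replace $\psi_1(f^ix)\,Df^i(x)$ by its local average $\beta_n(x)^{-1}\int_{f^i(I_n(x))}\psi_1$, modulo a remainder that is quadratic in $Df^i(x)$ and summable via the Lemma with $l=2$; the telescoping of the disjoint images $f^i(I_n(x))$ together with the zero-mean identity $\int_\T\psi_1=0$ then controls the main part by an integral over the complementary $I_{n+1}$-atoms. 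The combinatorics of the Bell polynomials in the inductive step is notationally heavy but conceptually routine; it is the quantitative analysis of this $r=1$ cancellation that constitutes the real content of the proposition.
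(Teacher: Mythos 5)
The paper itself gives no proof of this proposition: it only records that the estimate rests on the preceding Lemma together with \lq\lq formulas involving the chain rule for the Schwarzian derivatives\rq\rq, and refers to \cite{Y-smoothdiffeo} for the details. Your proposal never uses the Schwarzian derivative, and that is precisely the idea you are missing; both places where you defer the difficulty contain genuine gaps. In the inductive step, your treatment of the partitions with $k\ge 2$ blocks is correct (the Lemma with $l=k$ does give $M_n^{(r+k)/2-1}\beta_n(x)^{-r}\le (M_n^{1/2}/\beta_n(x))^r$ exactly when $k\ge 2$), but the single-block term does \emph{not} collapse to the case $r=1$: once each summand $\psi_1(f^ix)\,Df^i(x)$ is multiplied by the $i$-dependent weight $B_{r-1}(L_i^{(1)},\dots,L_i^{(r-1)})(x)$, the sum is no longer $D\ln Df^j(x)$, and one cannot bound $|\sum_i a_i b_i|$ by $(\max_i|b_i|)\,|\sum_i a_i|$. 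Taking absolute values instead yields only $C(M_n^{1/2}/\beta_n(x))^{r-1}\sum_{i<j}Df^i(x)\le C(M_n^{1/2}/\beta_n(x))^{r-1}\beta_n(x)^{-1}$, which is short by a factor $M_n^{1/2}$.

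The base case $r=1$ is where the real trouble lies, and your averaging scheme cannot produce the factor $M_n^{1/2}$ for two reasons. (i) The \lq\lq zero-mean plus telescoping\rq\rq\ main term is $\beta_n(x)^{-1}\bigl|\int_{E}D\ln Df\bigr|$ with $E=\T\setminus\bigcup_{i<j}f^i(I_n(x))$; but $E$ contains the images $f^i(I_{n+1}(x))$, $0\le i<q_n$, and its Lebesgue measure is not small in general (already for a rigid rotation and $j=q_{n+1}$ it equals $q_n\alpha_{n+1}=1-q_{n+1}\alpha_n$, which for bounded-type $\alpha$ stays bounded away from $0$), so this term is only $O(\beta_n(x)^{-1})$. (ii) The distortion remainder is not quadratic in $Df^i$: the oscillation of $y\mapsto\psi_1(f^iy)Df^i(y)$ over $I_n(x)$ contains the term $|\psi_1(f^iy)|\,|D^2f^i(y)|\le C\,Df^i(y)\,|D\ln Df^i(y)|$, and with only the a priori bound $|D\ln Df^i|\le C\beta_n^{-1}$ its total contribution is again $O(\beta_n(x)^{-1})$ --- you would need the conclusion to control the remainder. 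The mechanism that actually produces the square root is the Schwarzian cocycle identity $S(f^j)(x)=\sum_{i<j}Sf(f^ix)\,(Df^i(x))^2$: because the weight is $(Df^i)^2$, the Lemma with $l=2$ gives $|Sf^j(x)|\le CM_n\beta_n(x)^{-2}=C(M_n^{1/2}/\beta_n(x))^2$ with no cancellation argument at all. The first-order bound is then extracted from the relation $D^2\ln Df^j=Sf^j+\tfrac12(D\ln Df^j)^2$ together with the fact that $D\ln Df^j$ vanishes somewhere (periodicity of $\ln Df^j$) --- this is where the square root enters --- and the higher derivatives follow by differentiating the cocycle identity, every resulting term carrying a weight $(Df^i)^k$ with $k\ge 2$. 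You should rebuild the proof around the Schwarzian as in \cite{Y-smoothdiffeo}; the Fa\`a di Bruno bookkeeping for the $k\ge2$ partitions can be retained.
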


\subsubsection{A priori bounds: Improvement of Denjoy inequality (\ref{eq:denjoy})} \label{denjoy} 

 If we let $J_n(x)=(f^{-q_n}(x),f^{q_n}(x))$, then the intervals 
 $$J_n(x),\ldots,f^{q_{n+1}-1}(J_n(x))$$ 
 cover all the circle.

\begin{prop} $$|\ln Df^{q_n}(x)| \leq C M_n^{1/2}\quad \forall x\in\T.$$
\end{prop}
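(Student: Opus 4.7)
The plan is to exploit three ingredients: (i) a reference point $x_0$ where $\ln Df^{q_n}$ vanishes, (ii) the covering of $\T$ by the iterates $f^i(J_n(x_0))$ for $0\le i<q_{n+1}$, which lets us reach an arbitrary $x$ from $x_0$ via at most $q_{n+1}$ iterations of $f$, and (iii) the gradient bound of Proposition~\ref{estimate} applied with $r=1$.

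Since $f^{q_n}$ is a diffeomorphism of $\T$, $\int_\T Df^{q_n}=1$ and by the intermediate value theorem there exists $x_0\in\T$ with $Df^{q_n}(x_0)=1$, i.e. $\ln Df^{q_n}(x_0)=0$. By the covering hypothesis stated at the beginning of \S\ref{denjoy}, for any $x\in\T$ there exist $0\le j<q_{n+1}$ and $y\in J_n(x_0)$ with $x=f^j(y)$. Differentiating the commutation $f^j\circ f^{q_n}=f^{q_n}\circ f^j$ at $y$ gives
\[
Df^{q_n}(x)\,Df^j(y)=Df^j(f^{q_n}(y))\,Df^{q_n}(y),
\]
so that
\begin{equation}\label{eq:plandecomp}
\ln Df^{q_n}(x)=\ln Df^{q_n}(y)+\ln Df^j(f^{q_n}(y))-\ln Df^j(y).
\end{equation}

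I would then bound each of the two differences in \eqref{eq:plandecomp} using Proposition~\ref{estimate} with $r=1$. Denjoy's inequality \eqref{eq:denjoy} gives $M_n\le C\,m_n$, so $\beta_n(z)\ge m_n\gtrsim M_n$ uniformly in $z$. Since $y\in J_n(x_0)$ and $|J_n(x_0)|=\beta_n(x_0)+\beta_n(f^{-q_n}(x_0))\le 2M_n$,
\[
|\ln Df^{q_n}(y)|=|\ln Df^{q_n}(y)-\ln Df^{q_n}(x_0)|\le \int_{x_0}^{y}\!\!\frac{C\,M_n^{1/2}}{\beta_n(z)}\,dz\le \frac{C\,M_n^{1/2}\cdot 2M_n}{m_n}\le C\,M_n^{1/2}.
\]
For the second difference, the points $y$ and $f^{q_n}(y)$ are joined by an interval of length $\beta_n(y)\le M_n$, and $j\le q_{n+1}$ allows Proposition~\ref{estimate} to apply, giving
\[
|\ln Df^j(f^{q_n}(y))-\ln Df^j(y)|\le \int_{y}^{f^{q_n}(y)}\!\!\frac{C\,M_n^{1/2}}{\beta_n(z)}\,dz\le \frac{C\,M_n^{1/2}\cdot\beta_n(y)}{m_n}\le C\,M_n^{1/2}.
\]
Substituting into \eqref{eq:plandecomp} yields $|\ln Df^{q_n}(x)|\le C\,M_n^{1/2}$, uniformly in $x$.

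The only delicate point is controlling $\beta_n(z)$ from below on the two short intervals used above. I expect this to be the main (minor) obstacle: one must invoke the classical Denjoy distortion, namely \eqref{eq:denjoy}, to promote the trivial bound $\beta_n(z)\ge m_n$ into a bound of the form $\beta_n(z)\gtrsim M_n$, since otherwise the integrals of $1/\beta_n$ could be too large. Once that uniform comparability of $M_n$ and $m_n$ is in hand, the argument is a clean combination of the commutation trick and Proposition~\ref{estimate}.
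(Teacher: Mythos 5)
Your architecture is the same as the paper's: locate a zero of $\ln Df^{q_n}$, write an arbitrary $x$ as $f^j(y)$ with $y$ in a fundamental interval $J_n(\cdot)$ and $j<q_{n+1}$, decompose via the chain rule, and integrate the $r=1$ case of Proposition~\ref{estimate}. The gap is the inequality $M_n\le C\,m_n$, which you attribute to Denjoy's estimate \eqref{eq:denjoy} and on which both of your integral bounds rely. That inequality is false in general: boundedness of $M_n/m_n$ is precisely the $C^1$-linearizability criterion of Proposition~\ref{c1}, and it fails for instance for the smooth diffeomorphisms with Liouville rotation number that are not $C^1$-conjugate to the rotation, even though \eqref{eq:denjoy} holds for them. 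What \eqref{eq:denjoy} actually gives is that $\beta_n(f^{q_n}(x))$ and $\beta_n(x)$ are comparable (one step of $f^{q_n}$, via the mean value theorem), not that $\beta_n$ is comparable at two arbitrary points of the circle. Without $M_n\lesssim m_n$, your two integrals are only bounded by $C\,M_n^{1/2}(M_n/m_n)$, which is not the claimed estimate; the point is load-bearing, not a "minor obstacle".

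The fix is the paper's choice of base point: take $z$ with $\beta_n(z)=m_n$, i.e.\ the minimizer of $\beta_n$. Then $z$ is a critical point of $x\mapsto f^{q_n}(x)-x$, so $Df^{q_n}(z)=1$ and $\ln Df^{q_n}(z)=0$ --- you get the vanishing for free, with no need for your intermediate value argument on $\int_\T Df^{q_n}$ --- and, crucially, $|J_n(z)|\le C\,m_n$, since $\beta_n(f^{-q_n}(z))\le e^{\mathrm{Var}(\ln Df)}\beta_n(z)$ by the classical Denjoy bound on $Df^{q_n}$. Every interval over which one then integrates $C M_n^{1/2}/\beta_n$ (namely $J_n(z)$ for the term $\ln Df^{q_n}(t)-\ln Df^{q_n}(z)$, and $I_n(t)$ with $t\in J_n(z)$ for the term $\ln Df^{i}(f^{q_n}(t))-\ln Df^{i}(t)$) has length $O(m_n)$, while $\beta_n\ge m_n$ on it; the factors of $m_n$ cancel and each difference is $O(M_n^{1/2})$, with no comparison of $M_n$ and $m_n$ ever needed.
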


Take a point $z$ such that $\beta_n(z)={m}_n$. Then $\ln Df^{q_n}(z)=0$  (recall that $m_{n}=\min_{x\in\T}|f^{q_{n}}(x)-x|$). Now,  any point $x\in \T$ can be represented as $f^i(t)$ for some $i\in [0,q_{n+1})$ and $t \in J_n(z)$. Observe that 
$$Df^{q_n+i}(t)=Df^{q_n}(x)Df^i(t)=Df^{i}(f^{q_n}(t))Df^{q_n}(t)$$ 
hence 
\begin{align*} \ln Df^{q_n}(x)&= \ln Df^{i}(f^{q_n}(t))- \ln Df^{i}(t)+ \ln Df^{q_n}(t) \\
&=\ln Df^{i}(f^{q_n}(t))- \ln Df^{i}(t)+ \ln Df^{q_n}(t)-\ln Df^{q_n}(z). \end{align*}
From here, the mean value theorem applied to the two differences above, and the estimate of Proposition \ref{estimate} for $r=1$, 
and the fact that  $|J_n(z)|\leq Cm_n$ (because by the usual Denjoy estimate $Df^{q_n}$ is bounded), yield the improved  Denjoy inequality.
\subsubsection{Controlling the geometry: Relating $m_{n+1}(x)$ to $m_n(x)$} \label{main}

The main ingredient in the proof of $C^1$-linearization is the following estimate.

\begin{prop} \label{prop.main} If $f$ is of class $C^k$ then 
$$\left| \beta_{n+1}(x)-\frac{\a_{n+1}}{\a_n} \beta_n(x) \right| \leq C [M_n^{(k-1)/2}\beta_n(x)+M_n^{1/2} \beta_{n+1}(x)]\quad \forall x \in \T.$$
\end{prop}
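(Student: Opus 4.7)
The plan is to exploit the continued-fraction recursion $q_{n+1}=a_{n+1}q_n+q_{n-1}$ in order to derive an exact telescoping identity relating $\beta_{n-1}(x)$, $\beta_{n+1}(x)$, and an orbital sum of $\beta_n$, and then to Taylor-expand $\beta_n$ to order $k$ using the derivative bounds supplied by Proposition~\ref{estimate}.

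Assume $n$ is even (the odd case is symmetric). Set $F:=f^{q_n}$, so $F(y)-y=\beta_n(y)$, and let $w:=f^{q_{n-1}}(x)=x-\beta_{n-1}(x)$. The factorization $f^{q_{n+1}}=F^{a_{n+1}}\circ f^{q_{n-1}}$ combined with the telescoping identity $F^{a_{n+1}}(w)-w=\sum_{j=0}^{a_{n+1}-1}\beta_n(F^j(w))$, together with the evaluation $F^{a_{n+1}}(w)=f^{q_{n+1}}(x)=x-\beta_{n+1}(x)$, yields the exact relation
$$\beta_{n-1}(x)-\beta_{n+1}(x)=\sum_{j=0}^{a_{n+1}-1}\beta_n(F^j(w)).$$
Invoking the arithmetic identity $\alpha_{n-1}-\alpha_{n+1}=a_{n+1}\alpha_n$, this rewrites as
$$\beta_{n+1}(x)-\frac{\alpha_{n+1}}{\alpha_n}\beta_n(x)=\Big[\beta_{n-1}(x)-\frac{\alpha_{n-1}}{\alpha_n}\beta_n(x)\Big]-\sum_{j=0}^{a_{n+1}-1}\big[\beta_n(F^j(w))-\beta_n(x)\big],$$
reducing the proof to the control of the two terms on the right.

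To bound the orbital sum I Taylor-expand $\beta_n$ at $x$ to order $k$. Since $D^r\beta_n=D^rf^{q_n}$ for $r\geq 1$, Proposition~\ref{estimate} combined with the chain rule gives $|\beta_n^{(r)}(y)|\leq C[M_n^{1/2}/\beta_n(y)]^{r-1}$, and in particular $|\beta_n'|\leq CM_n^{1/2}$ (from the improved Denjoy estimate of Section~\ref{denjoy}). The $r$-th Taylor term contributes $(\beta_n^{(r)}(x)/r!)\sum_j(F^j(w)-x)^r$, and the discrete moments $\sum_j(F^j(w)-x)^r$ are expressed, via iterated telescoping along the orbit, as polynomials in $\beta_{n\pm 1}(x)$ and partial sums of $\beta_n(F^l(w))$. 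The first-order term, after multiplication by $\beta_n'(x)=O(M_n^{1/2})$ and rearrangement using the basic identity, produces a quantity of order $M_n^{1/2}\beta_{n+1}(x)$, which accounts for the second summand on the right-hand side of the statement. The higher-order Taylor terms decay via the factors $[M_n^{1/2}/\beta_n(x)]^{r-1}$, and the order-$k$ residue is ultimately bounded by $CM_n^{(k-1)/2}\beta_n(x)$.

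The residual bracket $\beta_{n-1}(x)-(\alpha_{n-1}/\alpha_n)\beta_n(x)$ is absorbed by iterating the argument one step back (or, more cleanly, by a finite induction with a base case in which the ratios $\beta_n/\beta_{n-1}$ and $\alpha_n/\alpha_{n-1}$ are uniformly comparable). The main obstacle is the precise bookkeeping of the intermediate Taylor moments of the orbit $\{F^j(w)\}$: a naive estimate based on $|F^j(w)-x|\leq \beta_{n-1}(x)$ is far too loose and would produce errors involving $\beta_{n-1}$ rather than $\beta_n$; one must instead iterate the telescoping identity to re-express the moments in terms of the quantities $\beta_{n\pm 1}(x)$ of interest, and then use the sharp form of Proposition~\ref{estimate} to absorb successive factors of $M_n^{1/2}$ into a geometric series, eventually matching the exact scaling $M_n^{(k-1)/2}\beta_n(x)+M_n^{1/2}\beta_{n+1}(x)$.
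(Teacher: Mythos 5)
There is a genuine gap, and it sits at the heart of your reduction. Your telescoping identity $\beta_{n-1}(x)-\beta_{n+1}(x)=\sum_{j=0}^{a_{n+1}-1}\beta_n(F^j(w))$ is correct, but it only involves the integer $a_{n+1}$; to make the ratio $\alpha_{n+1}/\alpha_n$ appear you are forced to add and subtract $\frac{\alpha_{n-1}}{\alpha_n}\beta_n(x)$, and the residual bracket $\beta_{n-1}(x)-\frac{\alpha_{n-1}}{\alpha_n}\beta_n(x)$ is then a quantity of exactly the same nature as the one you are trying to bound, only one level up and divided by the small number $\alpha_n/\alpha_{n-1}$. Iterating the proposition one step back gives at best $\bigl|\beta_{n-1}-\frac{\alpha_{n-1}}{\alpha_n}\beta_n\bigr|\leq \frac{\alpha_{n-1}}{\alpha_n}C\bigl[M_{n-1}^{(k-1)/2}\beta_{n-1}+M_{n-1}^{1/2}\beta_n\bigr]$, which exceeds the allowed error $C\bigl[M_n^{(k-1)/2}\beta_n+M_n^{1/2}\beta_{n+1}\bigr]$ by the unbounded factor $\alpha_{n-1}/\alpha_{n+1}$ (note also $M_{n-1}\geq M_n$, so the $M$'s go the wrong way as well). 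The induction does not close, and no "base case with comparable ratios" rescues it, since the error compounds multiplicatively through the scales. The deeper reason is that your argument never introduces the mechanism that ties the geometric quantities $\beta_m$ to the arithmetic quantities $\alpha_m$: in the paper this link is the invariant measure, via \eqref{eq.rot} and the exact identity $\mu(I_n(x))=\alpha_n$, which through $\int_x^{f^{q_{n}}(x)}(f^{q_{n+1}}-\mathrm{Id})\,d\mu=\int_x^{f^{q_{n+1}}(x)}(f^{q_{n}}-\mathrm{Id})\,d\mu$ and the mean value theorem produces points $y,z$ with $\beta_{n+1}(y)=\frac{\alpha_{n+1}}{\alpha_n}\beta_n(z)$ \emph{exactly}. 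The continued-fraction recursion alone cannot produce $\alpha_{n+1}/\alpha_n$; it only produces $a_{n+1}=\frac{\alpha_{n-1}}{\alpha_n}-\frac{\alpha_{n+1}}{\alpha_n}$, leaving the non-integer part uncontrolled.

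The Taylor-moment estimate is also asserted rather than proved, and the orders of magnitude look wrong. The points $F^j(w)$ sweep the interval $[f^{q_{n-1}}(x),f^{q_{n+1}}(x)]$, so the first discrete moment $\sum_j(F^j(w)-x)$ is of size $a_{n+1}\beta_{n-1}(x)$; multiplied by $\beta_n'(x)=O(M_n^{1/2})$ this gives $O(M_n^{1/2}a_{n+1}\beta_{n-1}(x))$, which is larger than the claimed $O(M_n^{1/2}\beta_{n+1}(x))$ by a factor on the order of $a_{n+1}^{3}\,\alpha_{n-1}/\alpha_{n+1}$, and nothing in the proposed "iterated telescoping" is shown to cancel it. Once you have the paper's pointwise identity $\beta_{n+1}(y)=\frac{\alpha_{n+1}}{\alpha_n}\beta_n(z)$ with $y\in I_n(x)$ and $z\in I_{n+1}(x)$, no moments are needed: $|\beta_n(z)-\beta_n(x)|\leq CM_n^{1/2}\beta_{n+1}(x)$ follows from $|z-x|\leq\beta_{n+1}(x)$ and the improved Denjoy inequality of Section \ref{denjoy}, and $|\beta_{n+1}(y)-\beta_{n+1}(x)|$ is handled by the monotonicity alternative (Rolle's theorem plus integration of Proposition \ref{estimate}), which is where the exponent $(k-1)/2$ actually comes from. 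I recommend restructuring your proof around that measure-theoretic identity.
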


{\small \begin{proof} 
For every $x \in \T$,  there exists $y \in [x,f^{q_n}(x)]$,  $z \in [f^{q_{n+1}}(x),x]$ such that 
$$\beta_{n+1}(y)=\frac{\a_{n+1}}{\a_n}\beta_n(z).$$ 
Indeed, this follows from \eqref{eq.rot} and the mean value theorem applied to the two sides of the following identity that is satisfied by the invariance of $\mu$  (the unique invariant probability measure of $f$)
$$\int_x^{f^{q_{n}}(x)} (f^{q_{n+1}}-{\rm Id}) d\mu=\int_x^{f^{q_{n+1}}(x)} (f^{q_{n}}-{\rm Id}) d\mu.$$

Hence, the proposition will follow if one proves 
\begin{equation} \label{111} |\beta_n(z)-\beta_n(x)|\leq CM_n^{1/2} \beta_{n+1}(x) \end{equation} 
and 
\begin{equation} \label{222} |\beta_{n+1}(y)-\beta_{n+1}(x)|\leq C(M_n^{1/2}  \beta_{n+1}(x)+M_n^{(k-1)/2} \beta_n(x)).\end{equation}

Now, \eqref{111}  comes from the improved Denjoy inequality and the fact that $|z-x|\leq  \beta_{n+1}(x)$. 
Finally, \eqref{222} is   proved 
 using an alternative  that we now sketch. 

\medskip

\noindent {\bf Alternative 1.} There exists $ \xi \in \T$ such that  $\beta_{n+1}$ is monotonous on $I_n(\xi)$. Then \eqref{222} will follow if we show that  for any $x\in \T$ and $y \in I_n(x)$  
\begin{equation} \label{333} |\beta_{n+1}(y)/\beta_{n+1}(x)-1|\leq CM_n^{1/2}. \end{equation}
To see this, observe that the improved Denjoy inequality gives  
$$|\beta_{n+1}(f^{q_{n}}(\xi)))-\beta_{n+1}(\xi)|=|\beta_{n}(f^{q_{n+1}}(\xi)))-\beta_{n}(\xi)|\leq C M_n^{1/2} \beta_{n+1}(\xi)$$
yielding by monotonicity \eqref{333} if we replace $x$ and $y$ by  any pair $t,t' \in [f^{-2q_n}(\xi),f^{q_n}(\xi)]$. In particular one can choose the pair $t,t'$ such that 
$f^j(t)=x$, $f^j(t')=y$ for some $j<q_{n+1}$. Now, by the intermediate value theorem 
$$\beta_{n+1}(y)/\beta_{n+1}(x)=(\beta_{n+1}(t')/\beta_{n+1}(t))(Df^j(\theta')/Df^j(\theta))$$
for some $\theta,\theta'\in  [f^{-3q_n}(\xi),f^{q_n}(\xi)]$. {Integrating the estimate of Proposition \eqref{estimate}} for $r=1$ we get that $|Df^j(\theta')/Df^j(\theta)-1|\leq M^{1/2}$, hence \eqref{333}.

\medskip

\noindent {\bf Alternative 2.} For any $\xi \in \T$,  $\beta_{n+1}$ is not monotonous on $I_n(\xi)$. Let $K(x)=[f^{-kq}(x),f^{kq}(x)]$. Observe that by the improved Denjoy estimate we have that $|K(x)|\leq C \beta_n(x)$ and $\beta_n(t)$ is comparable to $\beta_n(x)$ for any $t \in K(x)$. The non-monotonicity hypothesis implies that $Df^{q_{n+1}}-1$ has at least $k$ zeros inside $K(x)$. Hence, an iterative application of Rolle's theorem implies that $D^j \ln Df^{q_{n+1}}$ for $j\leq k-1$, all have zeros inside $K(x)$. The integration of  \eqref{estimate}  then gives for every $t\in K(x)$ that $|\ln Df^{q_{n+1}}(t)|\leq CM^{(k-1)/2}$, which ends the proof of \eqref{222} and thus of Proposition \ref{prop.main}.

\end{proof}
}
An immediate consequence of the Proposition  is the following.

\begin{acorollary} \label{cor:main} If $f$ is of class $C^k$ then 
\begin{align*}
M_{n+1}&\leq M_n \frac{\frac{\a_{n+1}}{\a_n}+CM_n^{(k-1)/2}}{1-CM_n^{1/2}} \\
m_{n+1}&\geq m_n \frac{\frac{\a_{n+1}}{\a_n}-CM_n^{(k-1)/2}}{1+CM_n^{1/2}} 
\end{align*}
\end{acorollary}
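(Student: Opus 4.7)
The Corollary is obtained by unwinding the absolute value in Proposition \ref{prop.main} and rearranging algebraically. The plan is to split that Proposition into its two one-sided inequalities, isolate $\beta_{n+1}(x)$ on the left of each, and then take $\sup$ (respectively $\inf$) over $x\in\T$.

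Concretely, for every $x\in\T$, Proposition \ref{prop.main} rewrites as the two-sided bound
$$\tfrac{\a_{n+1}}{\a_n}\beta_n(x) - CM_n^{(k-1)/2}\beta_n(x) - CM_n^{1/2}\beta_{n+1}(x) \;\leq\; \beta_{n+1}(x) \;\leq\; \tfrac{\a_{n+1}}{\a_n}\beta_n(x) + CM_n^{(k-1)/2}\beta_n(x) + CM_n^{1/2}\beta_{n+1}(x).$$
Collecting the $\beta_{n+1}(x)$ terms in the right-hand inequality yields $\beta_{n+1}(x)\bigl(1 - CM_n^{1/2}\bigr) \leq \beta_n(x)\bigl(\tfrac{\a_{n+1}}{\a_n} + CM_n^{(k-1)/2}\bigr)$. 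By the improved Denjoy estimate of Section \ref{denjoy} one has $M_n \to 0$, so for $n$ large enough $1 - CM_n^{1/2} > 0$ and dividing preserves the direction of the inequality. Bounding $\beta_n(x) \leq M_n$ and taking the supremum over $x$ produces the stated upper bound on $M_{n+1}$.

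The lower bound is treated symmetrically: the left-hand inequality rearranges to $\beta_{n+1}(x)\bigl(1 + CM_n^{1/2}\bigr) \geq \beta_n(x)\bigl(\tfrac{\a_{n+1}}{\a_n} - CM_n^{(k-1)/2}\bigr)$. Dividing by the (automatically positive) denominator $1 + CM_n^{1/2}$, bounding $\beta_n(x) \geq m_n$, and taking the infimum over $x$ gives the claim for $m_{n+1}$; if the numerator on the right happens to be negative for some $n$, the stated inequality holds trivially since $m_{n+1}\geq 0$.

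There is no substantive obstacle here --- all the analytic content was already packaged into Proposition \ref{prop.main}, and this Corollary is essentially a one-line algebraic rearrangement. The only small point to verify is the sign of $1 - CM_n^{1/2}$ before dividing, which is guaranteed by the regime $M_n \ll 1$ supplied by the \emph{a priori} bounds of Section \ref{denjoy}.
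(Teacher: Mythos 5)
Your derivation is correct and is exactly the intended one: the paper gives no separate argument, calling the Corollary ``an immediate consequence'' of Proposition \ref{prop.main}, and your unwinding of the absolute value, collection of the $\beta_{n+1}(x)$ terms, and passage to $\sup$/$\inf$ is precisely that immediate consequence. Your side remarks (positivity of $1-CM_n^{1/2}$ for large $n$ since $M_n\to 0$, and the trivial case of a negative numerator) are the right points to check, though $M_n\to 0$ is really a consequence of the Denjoy topological conjugacy rather than of the improved inequality of Section \ref{denjoy}.
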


\subsubsection{The Diophantine property and $C^1$-linearizability}

 We now assume that $\a$ is Diophantine. It is then a classical fact that there exist  constants $C,\beta>0$ such that $\a_{n+1} \geq C\a_{n}^{1+\beta}$.
Note that by the mean value theorem and \eqref{eq.rot}, we have that 
$$m_n\leq \a_n \leq M_n.$$
As one can see from the first inequality of the corollary of Section \ref{main}, if the term $\frac{\a_{n+1}}{\a_n}$ is dominated by the second one $CM_n^{(k-1)/2}$ then one essentially would have $M_{n+1}\leq M_n^{(k+1)/2}$ which cannot happen too often since  $ \a_{n} \leq M_n$ and $\a_{n+1}\geq C \a_n^{1+\beta}$. Indeed, using this idea it is easy for Yoccoz to show that in fact the first term is dominating all the time after some $n$ and that as an immediate consequence $\frac{M_n}{\a_n}$ is upper bounded. The Corollary then implies that $\frac{\a_n}{m_n}$ is also bounded and finish the $C^1$-conjugacy proof by Proposition \ref{c1}.

\subsubsection{Interpolation inequalities: Bootstrapping the regularity of the conjugacy}

Fix $\gamma\geq0$ and assume that $f$ is $C^{1+\gamma}$ conjugated to $T_\a$. The goal is to show that $f$ is $C^{1+\gamma_1}$ conjugated to $T_\a$ for any $\gamma_1 \in (\gamma,g(\gamma))$ where $g$ is the function {$g(\gamma)=((r-2-\s)+\gamma(1+\s))/(2+\s)$}. Iterating this argument gives that $f$ is $C^{r-1-\s-\eps}$ as claimed in Theorem \ref{A}, since $g$ satisfies $g(\gamma)> \gamma$ on $[0,r-2-\s)$  and {$g(r-2-\s)=r-2-\s$}.  This iteration method takes advantage of {\it a priori} bounds as in Proposition \ref{estimate}  and Hadamard convexity (or interpolation) inequalities that were already used by Herman in \cite{He}.

The  $C^{1+\gamma}$ conjugacy implies 
$$\|f^{q_n}-{\id}-q_n\a\|_{C^{\gamma}} \leq C \|q_n\a\|$$

From Proposition \ref{estimate}  and the $C^1$-conjugacy we get that 
$$\|D^{r-1} \ln Df^{q_n}\|_{C^0} \leq C q_n^{(r-1)/2}.$$
Combining the last two inequalities and using convexity estimates Yoccoz gets a bound on  $\| \ln Df^{q_n}\|_{\gamma_1}$ that after careful concatenation  yields for some $\eps>0$ and for any $n\geq 0$ and any $m\in [0,q_{n+1}/q_n|$ 
$$\| \ln Df^{mq_n}\|_{C^{\gamma_1}} \leq C q_n^{-\eps}$$

Boundedness of the iterates $f^n$ in the $C^{\gamma_1}$ norm then follow since every integer $N$ writes as $N=\sum_{n=0}^S b_n q_n$, $b_n \in [0,q_{n+1}/q_n]\cap\N$.

\subsection{Global dynamics in the Liouville case. Outline of the proof of Theorem \ref{theo:yoccoz-fo}}

When $\a$ is Diophantine, the result follows from Herman-Yoccoz global theorem that established $\cF_\a^\infty=\cO_\a^\infty$ in that case. 

In the Liouville case, the strategy of Yoccoz is to first show that a special class of diffeomorphisms called {\it quasi-rotations} are smoothly linearizable, and then show that Liouville circle diffeomorphisms can be perturbed into quasi-rotations of the same rotation number.

A quasi-rotation is a circle diffeomorphism whose   topological conjugacy to the rotation is affine with  nonzero slope on some interval. 
Another equivalent way of defining them is by supposing that there exists $n\geq 0$ such that  $f^{q_n}$ and $f^{q_{n+1}}$ behave like translations (with angles $\theta \a_n$ and $\theta \a_{n+1}$ for some $\theta>0$) in adequate small neighborhoods of some point $x_0$.

From the latter definition of quasi-rotations it is easy to see that for any $N=\sum_{S\geq s\geq n+1} b_s q_s$ with $S\geq n+1$, $b_s \leq q_{s+1}/q_s$, it holds that $f^{-N}$ is a translation on a small neighborhood of $x_0$. This implies that the negative iterates of $f$ have bounded derivatives of all orders at $x_0$. Since the negative orbit of $x_0$ is dense on the circle (by Denjoy topological conjugacy to $T_\a$) the smooth linearizability of quasi-rotations follows (see Section \ref{bounded}).

To show that a Liouville circle diffeomorphism can be perturbed to a quasi-rotation, estimates similar to the one of Proposition \ref{estimate} are needed. Taking into account the Liouville condition $\a_{n}\leq \a_{n-1}^{(k+1)/2}$ one can prove that 
$$|D^r \ln Df^{q_{n}}(x)|\leq C \frac{M_{n-1}^{\frac{(k-1)}{2}}}{\beta_{n-1}(x)^r}\quad \forall x\in\T.$$ 
The latter inequality, when specialized to the neighborhood of a point $x_0$ where $\beta_{n-1}$ reaches its maximum ($\beta_{n-1}(x_0)=M_{n-1}$), yields 
the required estimate of local flatness of $f^{q_n}$. This allows to perturb $f$ so that the condition of $f^{q_{n}}$ in the definition of the quasi-rotation condition is satisfied. 
Then it is possible to further perturb $f$ to make the condition on $f^{q_{n+1}}$ hold.
 The fact that 
 the first part of
  the condition on $f^{q_{n}}$ holds is used to prove the local flatness of $f^{q_{n+1}}$ required for the second perturbation since it is not possible anymore to rely on the Liouville property between $\a_{n+2}$ and $\a_{n+1}$ (that may not hold).

\subsection{Global aspects: Renormalization. The analytic case.}
The strategy of Yoccoz to analyze linearization in the analytic case follows a different path from the smooth case, namely  {\it renormalization}. The idea is that some well chosen {\it iterates} of a dynamical system can be {\it rescaled} yielding a new dynamical system which is closer to a {\it universal} situation.
Dynamical systems that can be renormalized are {\it a priori} rare.  One has first  to identify a fundamental domain with a not too complicated  geometry and then to  provide estimates for the first return map in this fundamental domain. These estimates  usually rely on arguments of geometric nature. In the case of analytic diffeomorphisms of  the circle the choice of the fundamental domain is quite canonical: let's still denote by $f$ a holomorphic and univalent  extension of (a lift of) $f$ on an neighborhood of the circle $\T$ and consider in this complex neighborhood a real symmetric vertical segment $L$. A choice of a fundamental domain is to consider a domain $U$ (with a deformed rectangular shape)  delimited vertically  by $L$ and $f(L)$ and horizontally by straight lines joining the boundaries of $L$ and $f(L)$. A first difficulty is to prove that, up to considering a thiner fundamental domain $R$ in $U$, first returns of points of  $R$ in $U$ are well defined. Now comes the {\it rescaling} (or the {\it normalization}):  by gluing $L$ and $f(L)$ the domain $U$ becomes an abstract Riemann surface $\hat U$ which is topologically  an annulus. The Uniformization Theorem tells us that $\hat U$ is conformally equivalent to an annulus of (horizontal) length 1. The conformal equivalence transports the domain $R$ and the first return map defined on $R$ into an annulus $\hat R$ and a holomorphic univalent map $F:\hat R\to\hat U$. The gain in this procedure is that if the (vertical) width of the initial cylinder $U$ is of order 1 and if its (horizontal) length is small  (more or less it compares to  $\|f-id\|_{C^0}$) by conformality the width of $\hat U$ (the horizontal length of which is  now of order 1)  is of the order of the inverse of the initial horizontal length. If the width of $\hat R$ is of the same order of magnitude then one ends up with a {\it univalent} map $F$ defined on a annulus of horizontal length one and large vertical width: but by Gr\"otzsch's theorem this forces the univalent map to be close to a translation (the universal model)  on a possibly smaller domain. There are several difficulties in this construction. First we should assume in order to properly define the fundamental domain  that $f$ is already close (in some complex strip) to a translation: considering iterates of the form $f^{q_{n}}$  (and a partial renormalization)
 this is a case to which one can reduce by using the $C^k$-theory of the Herman-Yoccoz theorem, an analysis that is done on the real axis only.  Once this is done, a control in the vertical (complex)  direction of the iterates of $f$  is necessary to control the width  of $R$ (resp. $\hat R$) and hence  the first returns of $f$ (resp. $F$) in $U$ (resp. $\hat U$): Yoccoz proves to that end  a {\it complex version} of Denjoy theorem.  The renormalization  procedure that associates $F$ to $f$ is then iterated yielding a sequence $(F_{n})_{n}$ of univalent maps defined on annuli of horizontal length 1 and larger and larger vertical width. The  preceding analysis then shows that when $\a$ satisfies an adequate arithmetic condition the domain of definitions of the renormalized maps (when viewed in the initial coordinates of the first step) do not shrink to zero, thus providing the proof of Lyapunov stability of the initial map and hence its linearizability (see Section \ref{bounded}). The new feature of this renormalization scheme compared to the one Yoccoz had designed ten years earlier  for holomorphic germs having elliptic fixed points \cite{Y-germs}  is that in the case of analytic circle diffeomorphisms  two different regimes appear: one where the nonlinearity $v_{n}$ of $F_{n}=T_{\a_{n}}+v_{n}$ is small compared to its translation part $T_{\a_{n}}$ ($\a_{n}=G^n(\a)$) and one where this nonlinearity  dominates. In the first case, the logarithm of the  nonlinearity of $F_{n+1}$ is of the order of $-\a_{n}^{-1}$ while in the second case it is of the order of $-\|v_{n}\|^{-1}$ (and is thus not anymore related to the arithmetics of $\a$ in a direct way).    This is at the origin of the fact that the needed arithmetic assumptions to ensure the linearization of an analytic circle  diffeomorphism is different whether we are in the local (condition $\cB$)  or the global case (condition $\cH$); this also explains why condition $\cH$ is more difficult to describe than condition $\cB$.

An important outcome of this renormalization paradigm is that it enabled Yoccoz to prove that condition $\cH$ is stronger than condition $\cB$. By {\it inverting} his renormalization construction he was able to construct analytic circle diffeomorphism satisfying condition $\cB$ which are not analytically linearizable; {\it cf.} Theorem \ref{theo:yoccoz-analytic-global}. This is a very nice construction that we shall not describe here; the interested reader is referred to  the excellent text \cite{Y-analyticdiffeo}.

Technically, it is more convenient to implement the preceding procedures  by using {\it commuting pairs} (instead of first return maps) since the link with the arithmetics of $\a$ is more transparent; this is a point we shall not develop further.


\begin{thebibliography}{999}
\bibitem{Arnold} V. I. Arnold, Small Denominators I; on the Mapping of a Circle into itself {\it Isvestijia Akad. Nauk, Math series} , {\bf 25}, 1, (1961), p. 21-96. {\it Transl. A.M.S,} 2nd series, {\bf 46}, p. 213-284.

\bibitem{Br}A.D. Brjuno, Analytic form of differential equations. I, II. (Russian) {\it Trudy Moskov. Mat. Obsc.} {\bf 25} (1971), 119-262; ibid. {\bf 26} (1972), 199-239.

\bibitem{De} A. Denjoy: Sur les courbes definies par les equations differentielles a la
surface du tore,{\it  J. Math. Pures et Appl}. {\bf 11}, serie 9, 333-375, (1932)
\bibitem{He}  M. Herman,  Sur la conjugaison différentiable des difféomorphismes du cercle à des rotations. {\it Inst. Hautes Études Sci. Publ. Math.} No. 49 (1979), 5-233. 

\bibitem{KO1} Y. Katznelson and D. Ornstein, The differentiability of the conjugation of
certain diffeomorphisms of the circle, Erg. Th. and Dyn. Sys. 9 (1989), p. 643--680.

\bibitem{KO2} Y. Katznelson and D. Ornstein, The absolute continuity of the conjugation
of certain diffeomorphisms of the circle, Erg. Th. and Dyn. Sys. 9 (1989), p. 681--690.

\bibitem{KS} K. Khanin and Y. Sinai, A new proof of Herman theorem, Comm. Math. Phys., 112 (1987), p. 89--101.

\bibitem{MZ} D. Montgomery, L.  Zippin,  {\it Topological transformation groups.} Interscience Publishers, New York-London, 1955.  282 pp.

\bibitem{PM}R. Pérez-Marco,  Fixed points and circle maps. {\it Acta Math.} {\bf 179} (1997), no. 2, 243-294.

\bibitem{Ru1}H. R\"ussmann, Uber die Iteration analytischer Funktionen. (German) 
{\it J. Math. Mech.} {\bf 17} (1967) 523-532

\bibitem{Ru2}H. R\"ussmann, Kleine Nenner. II. Bemerkungen zur Newtonschen Methode. (German)
{\it Nachr. Akad. Wiss. G\"ottingen Math.-Phys. Kl. II} (1972) 1-10.

\bibitem{Y-smoothdiffeo}J.-C. Yoccoz,  Conjugaison différentiable des difféomorphismes du cercle dont le nombre de rotation vérifie une condition diophantienne., {\it Ann. Sci. École Norm. Sup.} (4) {\bf 17} (1984), no. 3, 333-359.

\bibitem{Y}J.-C. Yoccoz,  Centralisateurs et conjugaison différentiable des difféomorphismes du cercle, Petits diviseurs en dimension 1. {\it Astérisque} No. 231 (1995), 89-242. 

\bibitem{Y-germs}J.-C. Yoccoz,  Théorème de Siegel, nombres de Bruno et polynômes quadratiques,  Petits diviseurs en dimension 1. {\it Astérisque} No. 231 (1995), 3-88.

\bibitem{Y-icm}J.-C. Yoccoz,  {\it Recent developments in Dynamics,
Proceedings of the International Congress of Mathematicians, Vol. 1, 2 (Zürich, 1994)}, 246-265, Birkh\"auser, Basel, 1995. 



\bibitem{Y-analyticdiffeo} J.-C. Yoccoz,  {\it Analytic linearization of circle diffeomorphisms. Dynamical systems and small divisors} (Cetraro, 1998), 125-173, Lecture Notes in Math., 1784, Fond. CIME/CIME Found. Subser., Springer, Berlin, 2002.



\end{thebibliography}
\end{document}